\newtheorem{teo}{Theorem}[section]
\newtheorem*{main}{Main Theorem}
\newtheorem{prop}[teo]{Proposition}
\newtheorem{lm}[teo]{Lemma}
\theoremstyle{definition}
\newtheorem{oss}[teo]{Remark}
\newtheorem*{ack}{Acknowledgments}
\numberwithin{equation}{section}
\begin{document}
\title[Principal frequencies, volume, inradius]{On principal frequencies, volume\\ and inradius in convex sets}

\subjclass{35P15, 49J40, 47A75}
\keywords{Convex sets, nonlinear eigenvalue problems, torsional rigidity, inradius.}

\author[Brasco]{Lorenzo Brasco}
\address[L.\ Brasco]{Dipartimento di Matematica e Informatica
\newline\indent
Universit\`a degli Studi di Ferrara
\newline\indent
Via Machiavelli 35, 44121 Ferrara, Italy}
\email{lorenzo.brasco@unife.it}

\author[Mazzoleni]{Dario Mazzoleni}
\address[D.~Mazzoleni]{Dipartimento di Matematica e Fisica N. Tartaglia
\newline\indent
Universit\`a Cattolica del Sacro Cuore
\newline\indent
Via Trieste 17, 25121 Brescia, Italy}
\email{dariocesare.mazzoleni@unicatt.it}

\begin{abstract}
We provide a sharp double-sided estimate for Poincar\'e-Sobolev constants on a convex set, 
 in terms of its inradius and $N-$dimensional measure. 
Our results extend and unify previous works by Hersch and Protter (for the first eigenvalue) and of Makai, P\'olya and Szeg\H{o} (for the torsional rigidity), by means of a single proof.
\end{abstract}

\maketitle

\begin{center}
\begin{minipage}{8cm}
\small
\tableofcontents
\end{minipage}
\end{center}

\section{Introduction}

\subsection{Principal frequencies and volume}
For every open and bounded set $\Omega\subset \mathbb{R}^N$, we start by considering two of the most studied {\it shape functionals}. Namely, its {\it principal frequency} (or  {\it first eigenvalue of the Dirichlet-Laplacian}) $\lambda(\Omega)$, and its {\it torsional rigidity} $T(\Omega)$. These are defined by 
\[
\lambda(\Omega)=\inf_{\varphi\in C^\infty_0(\Omega)\setminus\{0\}} \frac{\displaystyle\int_\Omega |\nabla \varphi|^2\,dx}{\displaystyle \int_\Omega |\varphi|^2\,dx}\quad \mbox{and}\quad T(\Omega)=\sup_{\varphi\in C^\infty_0(\Omega)\setminus\{0\}} \frac{\displaystyle \left(\int_\Omega |\varphi|\,dx\right)^2}{\displaystyle\int_\Omega |\nabla \varphi|^2\,dx}.
\]
These two quantities are well-studied and find many applications in different problems. In this paper, we will consider more generally the following {\it generalized principal frequencies}
\begin{equation}
\label{defla2q}
\lambda_{2,q}(\Omega)=\inf_{\varphi\in C^\infty_0(\Omega)\setminus\{0\}} \frac{\displaystyle\int_\Omega |\nabla \varphi|^2\,dx}{\displaystyle \left(\int_\Omega |\varphi|^q\,dx\right)^\frac{2}{q}},
\end{equation}
where $1\le q<2^*$, the latter being the usual Sobolev embedding exponent (see \cite{va, BraBus, CR} for some studies on these quantities).  Observe that the two functionals $\lambda$ and $T$ are just particular instances of this more general family of Poincar\'e-Sobolev constants, indeed with the previous notation
\[
\lambda(\Omega)=\lambda_{2,2}(\Omega)\qquad \mbox{ and }\qquad T(\Omega)=\frac{1}{\lambda_{2,1}(\Omega)}.
\]
The explicit computation of these quantities for a generic set $\Omega$ is a difficult task. Hence, it is of great importance (and interesting in itself) to provide sharp estimates for these functionals in terms of simpler quantities, often of geometric flavor. 
\vskip.2cm\noindent
The most celebrated estimate of this type is the so-called \emph{Faber-Krahn inequality}, which asserts that
\begin{equation}
\label{FK}
\lambda_{2,q}(\Omega)\ge \left(\lambda_{2,q}(B_1)\,\omega_N^{\frac{2}{N}+\frac{2-q}{q}}\right)\,|\Omega|^{-\frac{2}{N}-\frac{2-q}{q}}.
\end{equation}
Here $B_1\subset \mathbb{R}^N$ denotes a ball of unit radius and $\omega_N=|B_1|$. 
In other words, it is possible to bound from below $\lambda_{2,q}$, in terms of (a negative power of) the volume of the set.  Moreover, the estimate \eqref{FK} is sharp, since the lower bound is (uniquely) attained by balls, up to sets of zero capacity\footnote{By {\it capacity} of an open set $\Omega\subset\mathbb{R}^N$, we mean the quantity
\[
\mathrm{cap\,}(\Omega)=\inf_{u\in C^\infty_0(\mathbb{R}^N)}\left\{\int_{\mathbb{R}^N} |\nabla u|^2\,dx+\int_{\mathbb{R}^N}u^2\,dx\, :\, u\ge 1 \mbox{ on }\Omega\right\},
\]
see \cite[Chapter 4]{BB} for more details.}. We recall that \eqref{FK} follows in a standard way from the variational characterization of $\lambda_{2,q}$, by using {\it Schwarz symmetrization} and the so-called {\it P\'olya-Szeg\H{o} principle}, see \cite[Section 2]{Hen}.
\par
\noindent In passing, we observe that $\lambda_{2,q}(\Omega)$ is not comparable with $|\Omega|^{-2/N-(2-q)/q}$, not even among convex sets. For example, for a ``slab--type'' sequence, i.e. 
\[
\Omega_L=\left(-\frac{L}{2},\frac{L}{2}\right)^{N-1}\times (0,1),\qquad L>0,
\]
we have (see for example Lemma \ref{lm:slabtype} below)
\[
\lim_{L\to+\infty}\lambda_{2,q}(\Omega_L)\,|\Omega_L|^{\frac{2}{N}+\frac{2-q}{q}}=+\infty.
\]
\subsection{Principal frequencies and inradius}
In order to clarify the scope of the present paper, it is useful to recall at this point that the sharp lower bound \eqref{FK} may be quite weak for some classes of sets. 
For example, in the case $2\leq q< 2^*$, for the ``slab--type'' sequence, we have\footnote{Observe that $-\frac2N-\frac{2-q}{q}<0$ as $2\leq q<2^*$.}
\[
\lambda_{2,q}(\Omega_L)\ge \frac{1}{C} \qquad \mbox{ while }\qquad \lim_{L\to+\infty} |\Omega_L|^{-\frac{2}{N}-\frac{2-q}{q}}=0.
\]
This shows that for sets of this type, the lower bound \eqref{FK} is not very useful.
\par
In this case, a more robust and precise lower bound would be given in terms of the {\it inradius} $R_\Omega$ of a set $\Omega$, i.e. the radius of the largest open ball contained in $\Omega$. However, a {\it caveat} is needed here: such a kind of lower bound can hold true only under some suitable geometric restrictions on the sets. This is due to the fact that while a principal frequency $\lambda_{2,q}$ is not affected by removing points (and, more generally, sets with zero capacity), this operation can strongly modify $R_\Omega$: think for example of removing the center from a ball.
\par
One possibility is to work with open bounded and {\it convex} sets. By still sticking to the case $q\ge 2$ we have (see \cite[Proposition~6.3]{BraHer})
\begin{equation}
\label{HPq}
\lambda_{2,q}(\Omega) \ge \frac{C_{N,q}}{R_\Omega^{2+\frac{2-q}{q}\,N}}.
\end{equation}
As usual in this type of estimates, the power on the inradius is dictated by scale invariance.
Here $C_{N,q}>0$ is a universal constant, possibly depending on $N$ and $q$. 
\par
The value of the sharp constant in \eqref{HPq} is not known (see \cite[Open Problem 1]{BraHer}), except that for the particular case $q=2$. In this case, we know that
\begin{equation}\label{HP}
\lambda(\Omega)> \left(\frac{\pi}{2\,R_\Omega}\right)^2.
\end{equation}
We also notice that inequality in \eqref{HP} is strict among bounded convex sets, but the estimate is sharp. Indeed, for the ``slab--type'' sequence $\Omega_L$ we have
\[
\lim_{L\to+\infty}R_{\Omega_L}^2\,\lambda(\Omega_L)= \left(\frac{\pi}{2}\right)^2.
\]
Estimate \eqref{HP} has been first proved in two dimensions by Hersch in \cite{He}, by means of what he called {\it \'evaluation par d\'efaut}. The extension to higher dimensions is usually attributed to Protter, see \cite{Pr}. For this reason, we will refer to \eqref{HPq} and \eqref{HP} as {\it Hersch-Protter inequality}.
\par
In order to complete  the picture, we also recall that $\lambda_{2,q}$ is actually comparable with a power of the inradius. Indeed, by employing the monotonicity with respect to set inclusion of $\lambda_{2,q}$, we easily get
\begin{equation}
\label{banale}
\lambda_{2,q}(\Omega)\leq \frac{\lambda_{2,q}(B_1)}{R_\Omega^{2+\frac{2-q}{q}\,N}}.
\end{equation}
This inequality is optimal, as balls (uniquely) attain the equality cases. Moreover, the convexity requirement can now be dropped.
\par
For the moment, we just discussed the Hersch-Protter estimate for the case $q\ge 2$. The reason is simple: in the case $1\leq q<2$ the situation is entirely different. Indeed, as observed in \cite[Proposition 6.1]{BraHer}, {\it it is not possible to a have a Hersch-Protter estimate} in this regime. By calling again the ``slab-type'' sequence $\Omega_L$ into play, for $1\le q<2$ we have
\[
\lim_{L\to+\infty} \lambda_{2,q}(\Omega_L)=0\qquad \mbox{ and }\qquad R_{\Omega_L}=\frac{1}{2}, \mbox{ for }L>1.
\]
Thus \eqref{HPq} can not hold in this regime.
\par
On the other hand, \eqref{banale} immediately extends to this case, as well.

\subsection{Interpolating between inradius and volume}

The last observation was the starting point of the investigation pursued in the present paper. In other words, we look for suitable ``surrogates'' of the Hersch-Protter estimate \eqref{HPq}, in the case $1\le q<2$.
\par
In order to do this, we take again the example of the ``slab--type'' sequence $\Omega_L$ and analyze the asymptotic behavior of $\lambda_{2,q}(\Omega_L)$. Indeed, by Lemma \ref{lm:slabtype} below we have
\[
0<\lim_{L\to+\infty}\lambda_{2,q}(\Omega_L)\,|\Omega_L|^\frac{2-q}{q}.
\]
This suggests that a suitable Hersch-Protter estimate could hold among convex sets, provided a multiplicative correction term containing a power of the volume is taken into account. It turns out that this intuition is correct and for $1\le q<2$ we have
\[
\lambda_{2,q}(\Omega)\,|\Omega|^\frac{2-q}{q}\ge \frac{C}{R_\Omega^2}.
\]
The case $q=2$ coincides with the Hersch-Protter inequality, but curiously enough this estimate {\it does not} extend to the super-homogeneous case $2<q<2^*$. Moreover, the quantity $\lambda_{2,q}(\Omega)\,|\Omega|^{(2-q)/q}$ is actually equivalent to $R_\Omega^{-2}$.
\vskip.2cm\noindent
More precisely, the main results of this note are the following ones, whose proofs are contained in Sections \ref{sect:lower} and \ref{sect:upper} below. We refer to Section \ref{sec:prelim} for the definition of $\pi_{2,q}$.

\begin{teo}[Lower bound]
\label{teo:lowerbound}
Let $1\le q\le 2$, for every $\Omega\subset\mathbb{R}^N$ open bounded convex set, we have
\begin{equation}
\label{HPweak}
\lambda_{2,q}(\Omega)\,|\Omega|^\frac{2-q}{q}> \left(\frac{\pi_{2,q}}{2\,R_\Omega}\right)^2.
\end{equation}
The inequality is strict, but the estimate is sharp. 
\par
On the other hand, for $2<q<2^*$ we have 
\[
\inf\Big\{R^2_{\Omega}\,\lambda_{2,q}(\Omega)\,|\Omega|^\frac{2-q}{q}\, :\, \Omega\subset\mathbb{R}^N \mbox{ open bounded convex}\Big\}=0.
\]
\end{teo}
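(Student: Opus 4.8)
The plan is to exhibit an explicit minimizing sequence of convex sets realizing the infimum, and the natural candidate is precisely the ``slab--type'' sequence $\Omega_L=(-L/2,L/2)^{N-1}\times(0,1)$ already used throughout the Introduction. First I would record the two elementary geometric facts: for $L>1$ the inradius is governed by the thin direction, so that $R_{\Omega_L}=1/2$ is independent of $L$, while $|\Omega_L|=L^{N-1}$. I would also note that the functional $\Omega\mapsto R_\Omega^2\,\lambda_{2,q}(\Omega)\,|\Omega|^{(2-q)/q}$ is invariant under dilations, since the scaling exponents of its three factors cancel. Hence no rescaling alone can lower it, and a genuinely degenerating family is needed; this is exactly what the slab provides.

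The crucial step is a uniform upper bound for $\lambda_{2,q}(\Omega_L)$, and this is immediate from the inradius estimate \eqref{banale}. Indeed, since $R_{\Omega_L}=1/2$,
\[
\lambda_{2,q}(\Omega_L)\le \frac{\lambda_{2,q}(B_1)}{R_{\Omega_L}^{\,2+\frac{2-q}{q}N}}=2^{\,2+\frac{2-q}{q}N}\,\lambda_{2,q}(B_1),
\]
a constant independent of $L$. Equivalently, one may simply invoke the monotonicity of $\lambda_{2,q}$ under set inclusion together with the fact that every $\Omega_L$ with $L>1$ contains the fixed ball of radius $1/2$ centered at $(0,\dots,0,1/2)$.

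Combining these facts, for $L>1$ I would estimate
\[
R_{\Omega_L}^2\,\lambda_{2,q}(\Omega_L)\,|\Omega_L|^{\frac{2-q}{q}}\le 2^{\frac{2-q}{q}N}\,\lambda_{2,q}(B_1)\,L^{(N-1)\frac{2-q}{q}}.
\]
Here the exponent $(N-1)\frac{2-q}{q}$ is strictly negative precisely because $q>2$ (and $N\ge 2$), so the right-hand side tends to $0$ as $L\to+\infty$. Since the functional is manifestly nonnegative on every nonempty bounded convex set, this forces the infimum to equal $0$, which completes the proof.

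I do not expect a serious obstacle: the only point requiring care is the uniform upper bound on $\lambda_{2,q}(\Omega_L)$, and this is exactly where the hypothesis $q>2$ enters, since it makes the volume exponent negative so that the decaying volume factor is not overwhelmed by the (bounded) principal frequency. It is worth contrasting this with the regime $1\le q\le 2$ of the first part of the Theorem, where the volume factor is instead constant or growing and cannot destroy the lower bound; this sign change in $(2-q)/q$ is the structural reason the two regimes behave so differently.
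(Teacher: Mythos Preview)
Your proposal addresses only the last assertion of the theorem, namely that for $2<q<2^*$ the infimum of $R_\Omega^2\,\lambda_{2,q}(\Omega)\,|\Omega|^{(2-q)/q}$ over bounded convex sets equals zero. For that part your argument is correct and in fact slightly leaner than the paper's: the paper invokes Lemma~\ref{lm:slabtype} to identify the precise limit $\lambda_{2,q}(\Omega_L)\to\lambda_{2,q}(\mathbb{R}^{N-1}\times(0,1))>0$, whereas you only need a uniform upper bound on $\lambda_{2,q}(\Omega_L)$, which follows immediately from monotonicity with respect to set inclusion (equivalently, from \eqref{banale}). Both routes then conclude by the same computation exploiting the negative exponent $(N-1)(2-q)/q$.

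However, the theorem contains two further claims that you do not touch at all: the lower bound \eqref{HPweak} for $1\le q\le 2$, and its sharpness. These are the substantial content of the result. The paper's proof of \eqref{HPweak} is nontrivial: it adapts a PDE-based trick of Kajikiya, building a comparison function $\phi=v\circ(d_\Omega/R_\Omega-1)$ where $v$ solves a one-dimensional mixed problem, then uses the weak superharmonicity of $d_\Omega$ on convex sets (via Lemma~\ref{lm:composizione}) together with Picone's inequality to reach \eqref{pre}. From there H\"older's inequality yields \eqref{pre2}, and a coarea argument combined with the monotonicity Lemma~\ref{lm:tecnico} bounds $\int_\Omega\phi^q\,dx$ by $|\Omega|$, which closes the estimate. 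Sharpness is established separately, again via the slab sequence $\Omega_L$ and Lemma~\ref{lm:slabtype}. None of these ingredients appear in your proposal, so as written it proves only the easiest of the three claims and leaves the main inequality and its sharpness entirely open.
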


\begin{teo}[Upper bound]
\label{teo:upperbound}
Let $1\le q<2^*$, for every $\Omega\subset\mathbb{R}^N$ open bounded convex set, we have
\begin{equation}
\label{HPweakup}
\lambda_{2,q}(\Omega)\,|\Omega|^\frac{2-q}{q}\le \frac{\omega_N^\frac{2-q}{q}\,\lambda_{2,q}(B_1)}{R_\Omega^2}.
\end{equation}
The inequality is attained if and only if $\Omega$ is a ball.
Moreover, if $2\leq q<2^*$, the inequality holds among open and bounded sets without the convexity assumption.
\end{teo}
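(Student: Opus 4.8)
The plan is to split the argument according to the sign of the exponent $\alpha:=(2-q)/q$, since the two regimes need genuinely different ideas. Throughout I would use that the quantity $R_\Omega^2\,\lambda_{2,q}(\Omega)\,|\Omega|^{\alpha}$ is scale invariant (so one may normalize $R_\Omega=1$) and that the largest inscribed ball $B_{R_\Omega}\subset\Omega$ always exists by definition of the inradius.

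For $2\le q<2^*$ (so $\alpha\le0$) no convexity is needed and the proof is soft. First, by domain monotonicity of $\lambda_{2,q}$ (every $\varphi\in C_0^\infty(B_{R_\Omega})$ is admissible for $\Omega$) together with scaling, $\lambda_{2,q}(\Omega)\le\lambda_{2,q}(B_{R_\Omega})=R_\Omega^{-2-\alpha N}\,\lambda_{2,q}(B_1)$. Second, since $B_{R_\Omega}\subset\Omega$ we have $|\Omega|\ge\omega_N R_\Omega^N$, and because $\alpha\le0$ this gives $|\Omega|^{\alpha}\le\omega_N^{\alpha}R_\Omega^{\alpha N}$. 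Multiplying the two estimates yields exactly \eqref{HPweakup}. Equality forces equality in both steps, i.e. $|\Omega|=|B_{R_\Omega}|$ and $\lambda_{2,q}(\Omega)=\lambda_{2,q}(B_{R_\Omega})$, whence $\Omega=B_{R_\Omega}$; this already settles the last sentence of the statement.

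The case $1\le q<2$ is the substantial one: here $\alpha>0$, the monotonicity bound alone is too weak (it would require $|\Omega|\le\omega_N R_\Omega^N$, which is false), and convexity is essential. My plan is to feed into the Rayleigh quotient a test function depending only on the distance to the boundary, $\varphi=g(d_\Omega)$ with $g(0)=0$. Since $\Omega$ is convex, $|\nabla d_\Omega|=1$ a.e., so the co-area formula turns the quotient into a one-dimensional weighted one,
\[
\lambda_{2,q}(\Omega)\le \frac{\displaystyle\int_0^{R_\Omega}|g'|^2\,P\,dt}{\displaystyle\left(\int_0^{R_\Omega}|g|^q\,P\,dt\right)^{2/q}},
\]
where $P(t)=\mathcal H^{N-1}(\{d_\Omega=t\})$ is the perimeter of the inner parallel set at distance $t$ and $|\Omega|=\int_0^{R_\Omega}P\,dt$. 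The key structural input is that, for a convex body, the inner parallel sets satisfy $\Omega_{-t}=\Omega_{-R_\Omega}\oplus(R_\Omega-t)\,B_1$ for $t\in[0,R_\Omega]$, so by the Steiner formula $t\mapsto P(R_\Omega-t)$ is a polynomial of degree $N-1$ in $(R_\Omega-t)$ with nonnegative coefficients and leading coefficient $N\omega_N$. In particular $P(t)\ge N\omega_N(R_\Omega-t)^{N-1}=:P_0(t)$, the weight of the ball $B_{R_\Omega}$, with equality for all $t$ only when $\Omega$ is a ball.

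I would then take $g=g_*$ to be the radial first eigenfunction of $B_{R_\Omega}$, written as a function of the distance, for which the displayed quotient with weight $P_0$ equals exactly $\lambda_{2,q}(B_{R_\Omega})$, and reduce \eqref{HPweakup} to the scale-invariant one-dimensional inequality
\[
\frac{A(P)\,\left(\int_0^{R_\Omega}P\,dt\right)^{\alpha}}{D(P)^{1+\alpha}}\le \frac{A(P_0)\,\left(\int_0^{R_\Omega}P_0\,dt\right)^{\alpha}}{D(P_0)^{1+\alpha}},
\]
where $A(P)=\int|g_*'|^2P\,dt$, $D(P)=\int|g_*|^qP\,dt$ and $1+\alpha=2/q$. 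Writing $P=P_0+\rho$ with $\rho\ge0$ a nonnegative combination of the lower powers $(R_\Omega-t)^j$, this added mass sits preferentially near $t=R_\Omega$, where $g_*$ is largest and $g_*'$ smallest; hence $\rho$ inflates the denominator $D$ much more than the numerator $A$, and the plan is to show this gain outweighs the growth of the volume factor. The main obstacle is precisely this weighted functional inequality: it is \emph{false} for arbitrary nonnegative weights (concentrating mass near the Dirichlet end $t=0$ exceeds the ball value), so the proof must use the convexity constraint — the fixed leading coefficient $N\omega_N$, equivalently $P\ge P_0$ together with the polynomial form of $\rho$ — in an essential, quantitative way, presumably through a correlation (Chebyshev-type) inequality between $g_*$, $g_*'$ and the monotone weight. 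The same rigidity analysis then forces $\rho\equiv0$ in the equality case, i.e. $\Omega=B_{R_\Omega}$.
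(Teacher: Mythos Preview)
Your treatment of the case $2\le q<2^*$ is correct and coincides with the paper's argument.

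For $1\le q<2$ there is a genuine gap. First, the structural claim $\Omega_{-t}=\Omega_{-R_\Omega}\oplus(R_\Omega-t)B_1$ is false: for a rectangle $[0,L]\times[0,1]$ with $L>1$ the inner parallel body $\Omega_{-t}$ is again a rectangle, while the right-hand side is a stadium; one only has the inclusion $\supseteq$. Consequently $t\mapsto P(R_\Omega-t)$ is \emph{not} a polynomial with leading term $N\omega_N t^{N-1}$, and your decomposition $P=P_0+\rho$ with $\rho$ a nonnegative combination of lower powers is unjustified. Second, and more seriously, you never prove the one-dimensional weighted inequality you need; you only say it should follow ``presumably through a correlation (Chebyshev-type) inequality'', while acknowledging that it fails for arbitrary nonnegative weights. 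Without a proof of this step the argument is incomplete, and since the polynomial structure you intended to exploit is not available, it is unclear what convexity input would salvage it.

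The paper avoids this difficulty by a different choice of foliation. Instead of the distance function $d_\Omega$, whose level sets are inner parallel bodies with uncontrolled shape, it uses the Minkowski functional $j_\Omega$ (centered at the incenter), whose level sets are the dilates $t\,\partial\Omega$. With the test function $f\circ j_\Omega$, where $f$ is the radial profile of the extremal for $\lambda_{2,q}(B_1)$, the coarea formula gives
\[
\int_\Omega f(j_\Omega)^q\,dx=N\,|\Omega|\int_0^1 f(t)^q\,t^{N-1}\,dt,
\qquad
\int_\Omega |\nabla(f\circ j_\Omega)|^2\,dx=\Big(\int_0^1 |f'|^2\,t^{N-1}\,dt\Big)\int_{\partial\Omega}|\nabla j_\Omega|\,d\mathcal H^{N-1},
\]
so the \emph{same} one-dimensional weight $t^{N-1}$ as for the ball appears on both sides, and no weighted functional inequality is needed. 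The only geometric input is the pointwise bound $\langle x,\nu_\Omega(x)\rangle\ge R_\Omega$ on $\partial\Omega$ (equivalent to $|\nabla j_\Omega|\le 1/R_\Omega$), which immediately yields $\int_{\partial\Omega}|\nabla j_\Omega|\,d\mathcal H^{N-1}\le N|\Omega|/R_\Omega^2$ and hence \eqref{HPweakup}. Equality then forces $\langle x,\nu_\Omega\rangle=R_\Omega$ a.e.\ on $\partial\Omega$; together with the $C^1$ regularity coming from the Lane--Emden equation satisfied by the extremal, this characterizes the ball. The moral is that $j_\Omega$, not $d_\Omega$, is the foliation adapted to this problem.
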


\begin{oss}[Previous results]
Our results extend to the case $q>1$ some previous results known for the case $q=1$.
Indeed, by recalling that
\[
\lambda_{2,1}(\Omega)=\frac{1}{T(\Omega)},
\]
the estimates \eqref{HPweak} and \eqref{HPweakup} can be rewritten as the double-sided control on the torsional rigidity
\begin{equation}\label{MPS}
\left(\frac{T(B_1)}{\omega_N}\right)\,|\Omega|\,R_\Omega^2\le T(\Omega)<\frac{1}{3}\,|\Omega|\,R_\Omega^2.
\end{equation}
In dimension $N=2$, the lower bound is due to\footnote{{\it Caveat} for the reader: in the notation of both \cite{PS} and \cite{Ma}, we have
$4\,T(\Omega)=P(\Omega)$.} P\'olya and Szeg\H{o} (see \cite[equation (7), page 100]{PS}), while the upper bound has been proved by Makai (see \cite[equation (3')]{Ma}). Both results have been generalized in \cite{BuGuMa} to every dimension $N\ge 2$ (see also \cite[Theorem 1.1]{DGG}). Moreover, both of them are sharp, as the lower bound is (uniquely) attained by balls, while the upper bound is asymptotically attained by the ``slab-type'' sequence $\Omega_L$.
\par
In any case, we point out that our identification of equality cases in \eqref{HPweakup} appears to be new, even for the torsional rigidity, i.e. for the case $q=1$.
\end{oss}
A comment on our proofs is in order.
\begin{oss}[Method of proof]
Our proof of \eqref{HPweak} is different from the one by Makai, dealing with the case $q=1$. The latter seems quite difficult to adapt to the case $1<q<2$. Rather, we adapt a PDE-based technique used by Kajikiya in \cite{Ka}, to give a different proof of the Hersch-Protter estimate \eqref{HP}. We show that this technique is flexible enough to be adapted to the case $1\le q<2$, without loss of sharpness. This permits to unify the results of Makai and Hersch \& Protter, by means of a single proof. 
\par
For \eqref{HPweakup} we use the very same method of proof given by P\'olya and Szeg\H{o} for the case $q=1$. This is based on a variant of the so called {\it method of interior parallels}. This consists in choosing a suitable test function in the variational formulation \eqref{defla2q}: it turns out that a function of the {\it Minkowski functional} of $\Omega$ does the job. 
While in \cite{PS} the explicit form of the extremal of the ball is used, here we show that the knowledge of this explicit form is irrelevant. All that is needed is just that there exists an extremal function for $\lambda_{2,q}(B_1)$ which is radial. We also pay particular attention to the identification of equality cases, which is a bit subtle.
\end{oss}
We now comment on the convexity assumption.
\begin{oss}[Convexity matters]
When $1\le q\le 2$, both inequality \eqref{HPweak} and inequality \eqref{HPweakup} {\it can not hold for general open sets}. 
\par
For the first one, we use again that removing points affects the inradius, but not a generalized principal frequency.
That is, by taking the sequence of bounded open sets
\[
\Omega_n=(-n,n)^N\setminus \{x=(x_1,\dots,x_N)\in \mathbb{Z}^N\, :\, |x_i|\le n-1 \mbox{ for } i=1,\dots,N \},
\]
by scaling and using that points have zero capacity, we get
\[
\lambda_{2,q}(\Omega_n)\,|\Omega_n|^\frac{2-q}{q}=\lambda_{2,q}\Big((-n,n)^N\Big)\,(2\,n)^{N\,\frac{2-q}{q}}=\frac{2^{N\,\frac{2-q}{q}}\,\lambda_{2,q}\Big((-1,1)^N\Big)}{n^2}.
\]
This implies that 
\[
\lim_{n\to\infty}\lambda_{2,q}(\Omega_n)\,|\Omega_n|^\frac{2-q}{q}=0,
\]
while it is easily seen that $R_{\Omega_n}=\sqrt{N}/2$.
\par
As a counterexample to \eqref{HPweak}, one can consider a disjoint union of balls
\[
\Omega_n=\bigcup_{i=1}^n B_{r_i}(x_i),
\]
with the radius given by
\[
r_i=\sqrt[N]{\frac{1}{i}},\qquad i\ge 1,
\]
and the centers of the balls chosen so that $B_{r_i}(x_i)\cap B_{r_j}(x_j)=\emptyset$ for all $i\not= j$.
This choice guarantees that
\[
R_{\Omega_n}=r_1=1\qquad  \mbox{ and }\qquad \lim_{n\to\infty}|\Omega_n|=+\infty,
\]
while by \cite[Example 5.2]{BR} we have
\[
\lim_{n\to\infty} \lambda_{2,q}(\Omega_n)>0.
\]
We point out that this is no more a counterexample as soon as $q\geq 2$, as the exponent of the measure term $(2-q)/q$ becomes non-positive.
\end{oss}

\subsection{Plan of the paper}
After the Introduction, in Section~\ref{sec:prelim} we fix the notation, recall some known facts about the Poincar\'e-Sobolev constants and give some properties of the Minkowski functional of a convex set.
Section~\ref{sect:lower} is devoted to the proof of Theorem \ref{teo:lowerbound}, while in Section~\ref{sect:upper} we prove the upper bound of Theorem \ref{teo:upperbound}.
Finally, in Section~\ref{sect:further} we discuss the case of more general versions of our ``mixed'' estimate, in terms of different powers of volume and inradius.

\begin{ack}
The initial input for this research has been a question raised by Andrea Malchiodi during a talk of the first author. We wish to thank him. We also thank Vladimir Bobkov, for pointing out the paper \cite{Ka} to our attention.
\par
D.\,M. has been supported by the INdAM-GNAMPA 2019 project  ``{\it Ottimizzazione spettrale non lineare\,}''.
Part of this work has been done during a visit of L.\,B. to Brescia and a visit of D.\,M. to Ferrara. The hosting institutions and their facilities are gratefully acknowledged.
\end{ack}

\section{Preliminaries}\label{sec:prelim}
\subsection{Notation}
For the whole paper, $N\geq 2$ is the dimension of the space and we denote by $2^*$ the critical Sobolev exponent, i.e.
\[
2^*=\left\{\begin{array}{cc}
\dfrac{2\,N}{N-2},& \mbox{ if } N\ge 3,\\
&\\
+\infty, & \mbox{ if } N=2.
\end{array}
\right.
\] 
For an open set $\Omega\subset \mathbb{R}^N$, we denote by $|\Omega|$ its $N-$dimensional Lebesgue measure and use the standard notation for the balls:\[
B_R(x_0)=\left\{x\in\mathbb{R}^N : |x-x_0|<R\right\},\qquad \omega_N=|B_1(0)|.
\]
We will omit the center when this will coincide with the origin.
Whenever it is well-defined, we call $\nu_\Omega(x)$ the outer unit normal versor at a point $x\in\partial \Omega$.
\subsection{Inradius}
\label{subsec:inradius}
For an open bounded set $\Omega\subset \mathbb{R}^N$ with Lipschitz boundary, we define the distance function from the boundary
\[
d_{\Omega}(x)=\inf_{y\in \partial\Omega}|x-y|,\qquad \mbox{ for }x\in\Omega.
\]
We recall that this is a $1-$Lipschitz function. Moreover, if $\Omega$ is convex, then $d_\Omega$ is concave and thus it is a weakly superharmonic function.
It is well-known that the {\it inradius} $R_\Omega$ of $\Omega$ (i.e. the radius of the largest ball included in $\Omega$) coincides with 
\[
R_\Omega=\sup_{x\in\Omega} d_\Omega(x).
\]
We present now a property of convex sets related to the inradius, which we will use in the proof of the rigidity for the upper bound  \eqref{HPweakup}. Though it should be somehow classical, we did not find a precise reference, so we give a proof for completeness.
\begin{lm}
\label{lm:inradius}
Let $\Omega\subset\mathbb{R}^N$ be an open bounded convex set. Let us suppose that for some $R>0$, $B_R\subset\Omega$, then we have
\begin{equation}
\label{inradiusgen}
R\le \langle x,\nu_\Omega(x)\rangle,\qquad \mbox{ for $\mathcal{H}^{N-1}-$a.\,e. }x\in\partial\Omega.
\end{equation}
Moreover, if $\Omega$ is of class $C^1$ and we have
\[
R= \langle x,\nu_\Omega(x)\rangle,\qquad \mbox{ for every }x\in\partial\Omega,
\]
then it must hold $\Omega=B_R$.
\end{lm}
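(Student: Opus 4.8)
The plan is to base everything on the supporting hyperplane characterization of convexity. Since $\Omega$ is a bounded convex set, its boundary is Lipschitz and the outer normal $\nu_\Omega(x)$ exists for $\mathcal{H}^{N-1}-$a.\,e. $x\in\partial\Omega$; at every such point there is a (unique) supporting hyperplane, so that
\[
\langle y-x,\nu_\Omega(x)\rangle\le 0\qquad \text{for all } y\in\Omega.
\]
To prove \eqref{inradiusgen} I would simply insert into this inequality the points of the inner ball. Since $B_R\subset\Omega$, every $y=R\,\omega$ with $\omega\in S^{N-1}$ is admissible, and taking the supremum over $\omega$ gives
\[
R=\sup_{\omega\in S^{N-1}}\langle R\,\omega,\nu_\Omega(x)\rangle\le \langle x,\nu_\Omega(x)\rangle,
\]
where I used that $|\nu_\Omega(x)|=1$. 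This is exactly \eqref{inradiusgen}, and no regularity beyond the a.\,e.\ existence of the normal is needed.

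For the rigidity statement, I would reinterpret the equality condition through the support function $h_\Omega(\xi)=\sup_{y\in\Omega}\langle y,\xi\rangle$. At a boundary point $x$ with outer normal $\nu_\Omega(x)$, the supporting hyperplane inequality says precisely that $x$ maximizes $\langle\,\cdot\,,\nu_\Omega(x)\rangle$ over $\Omega$, that is $h_\Omega(\nu_\Omega(x))=\langle x,\nu_\Omega(x)\rangle$. Hence the equality hypothesis $R=\langle x,\nu_\Omega(x)\rangle$ becomes $h_\Omega(\nu_\Omega(x))=R$ for every $x\in\partial\Omega$.

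The key remaining point, and the one I expect to be the main obstacle, is to upgrade this from ``for every normal direction that actually occurs'' to ``for every direction of $S^{N-1}$''. This is where the $C^1$ assumption enters: for the convex body $\overline{\Omega}$ with $C^1$ boundary the Gauss map $x\mapsto\nu_\Omega(x)$ is surjective onto $S^{N-1}$ (given $\xi\in S^{N-1}$, the linear functional $\langle\,\cdot\,,\xi\rangle$ attains its maximum over $\overline{\Omega}$ at some $x_\xi\in\partial\Omega$, and $C^1$ regularity forces the unique outer normal there to be $\xi$). Consequently $h_\Omega\equiv R$ on $S^{N-1}$, which coincides with the support function of $B_R$, and since a convex body is uniquely determined by its support function this yields $\Omega=B_R$. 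Equivalently, and avoiding the uniqueness theorem, I could argue directly: each supporting hyperplane $\{\,\langle y,\nu_\Omega(x)\rangle=R\,\}$ is tangent to $\partial B_R$, so $\Omega$ is contained in the intersection of the half-spaces $\{\,\langle y,\xi\rangle\le R\,\}$ over all $\xi\in S^{N-1}$, which is exactly $\overline{B_R}$; combined with $B_R\subset\Omega$ and the openness of $\Omega$, this forces $\Omega=B_R$.
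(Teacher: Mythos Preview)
Your proof of \eqref{inradiusgen} is essentially identical to the paper's: both insert a point of $\overline{B_R}$ into the supporting half-space inequality, the paper choosing directly $y=R\,\nu_\Omega(x)$ while you phrase the same choice as a supremum over $S^{N-1}$.

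For the rigidity statement, however, your route differs genuinely from the paper's. The paper argues by contradiction: assuming $\Omega\ne B_R$, it picks $x_0\in\partial\Omega$ maximizing the distance to $\partial B_R$, writes $\partial\Omega$ near $x_0$ as the graph of a $C^1$ function $\Phi$, and uses the maximality (together with the $C^1$ assumption) to force $\nabla\Phi(0)=0$; this pins down $\nu_\Omega(x_0)$, and the equality $R=\langle x_0,\nu_\Omega(x_0)\rangle$ then yields an immediate contradiction. Your argument is global rather than local: you recast the equality as $h_\Omega(\nu_\Omega(x))=R$ and use the $C^1$ hypothesis to obtain surjectivity of the Gauss map, so that $h_\Omega\equiv R$ on $S^{N-1}$ and hence $\Omega=B_R$. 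Both arguments are correct and both exploit the $C^1$ assumption at exactly one spot. Yours is more conceptual and ties the result to standard convex-geometry machinery (support functions, Gauss map), while the paper's is self-contained and makes the geometric obstruction visible at a single extremal point. Your alternative ``direct containment'' argument---intersecting the half-spaces $\{\langle y,\xi\rangle\le R\}$ over all $\xi\in S^{N-1}$---is the cleanest of the three and sidesteps any appeal to uniqueness of the support-function representation.
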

\begin{proof}
We observe that for every $x\in\partial \Omega$, by convexity
\[
\overline{B_{R}}\subset \overline{\Omega} \subset \Big\{y\in\mathbb{R}^N\, :\, \langle y-x,\nu_\Omega(x)\rangle\le 0 \Big\}.
\]
In particular, by taking the point $y=R\,\nu_\Omega(x)\in\partial B_{R}$, we get
\[
R=\langle R\,\nu_\Omega(x),\nu_\Omega(x)\rangle\le \langle x,\nu_\Omega(x)\rangle.
\]
This proves \eqref{inradiusgen}.
\vskip.2cm\noindent
We now suppose that $\Omega$ is of class $C^1$ and assume that equality in \eqref{inradiusgen} holds for every $x\in\partial\Omega$. We argue by contradiction and suppose that $\Omega\not= B_R$. We take $x_0\in\partial\Omega$ such that
\[
T:=\mathrm{dist}(x_0,\partial B_R)=\max_{y\in\partial\Omega} \mathrm{dist}(y,\partial B_R).
\]
By the contradiction assumption, we have $T>0$.
Up to a rigid movement, we can suppose that \[
x_0=(0,\dots,0,-T-R),
\] 
and find a certain $r_0>0$ such that 
\[
\Gamma:=\partial\Omega\cap \Big([-r_0,r_0]^{N-1}\times[-T-R-r_0,-T-R+r_0]\Big),
\] 
coincides with the graph of a $C^1$ convex function $\Phi:[-r_0,r_0]^{N-1}\to \mathbb{R}$, with
 \[
\Phi(0,\dots, 0)=-T-R<-R.
\]
Moreover, by maximality of $x_0$, we have 
\begin{equation}
\label{nabla0}
\nabla \Phi(0,\dots,0)=0,
\end{equation}
see Figure \ref{fig:rigid}.
\begin{figure}
\includegraphics[scale=.3]{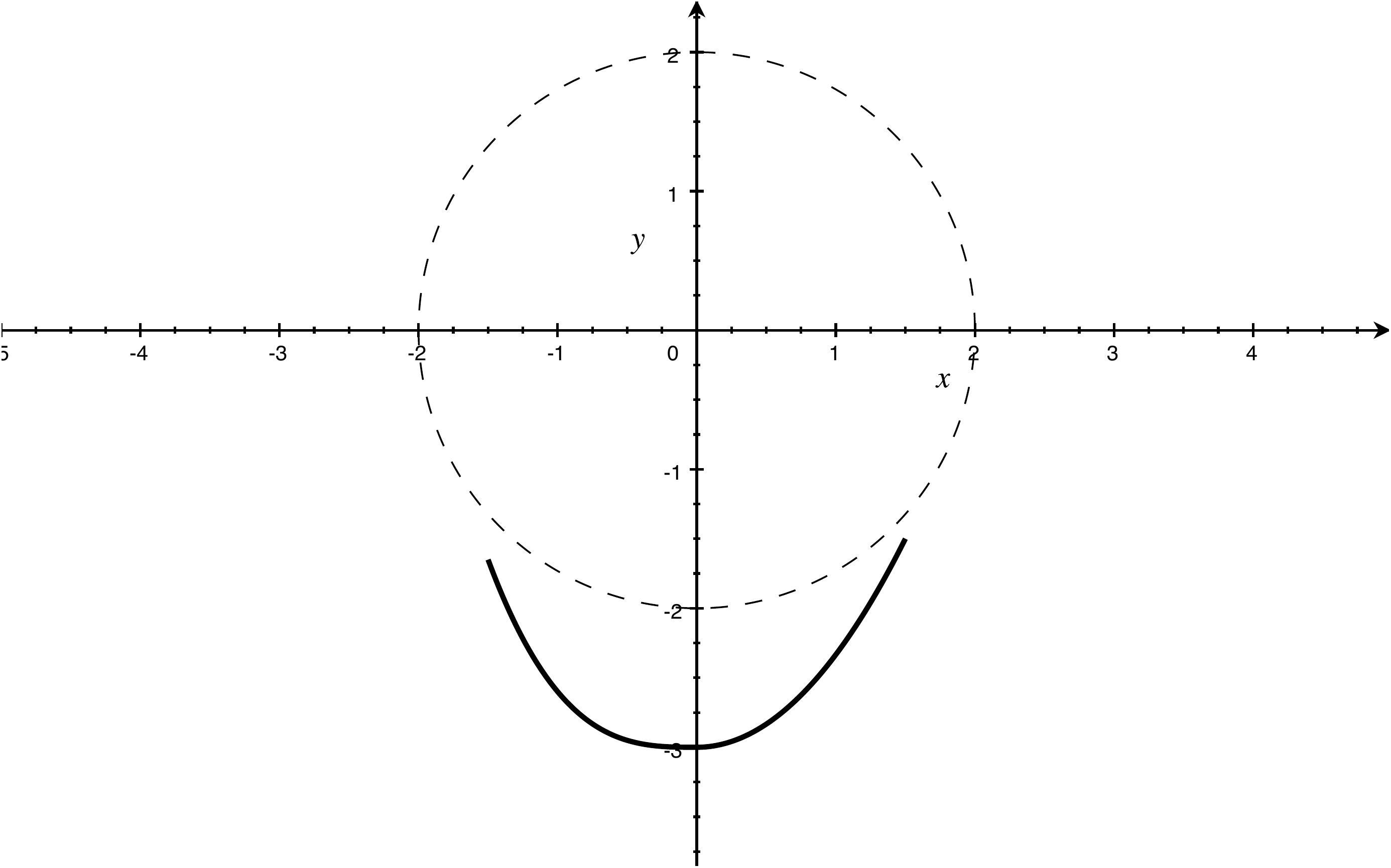}
\caption{The geometric configuration in the proof of Lemma \ref{lm:inradius}. The bold line represents the graph of $\Phi$.}
\label{fig:rigid}
\end{figure}
Then, at the point $x_0=(0,\dots,0,-T-R)$ we have $\nu_\Omega(x_0)=(0,\dots,0,-1)$. Thus, by using that we have equality in \eqref{inradiusgen}, we get
\[
R=\langle x_0,\nu_\Omega(x_0)\rangle=T+R.
\]
Since $T>0$, this gives the desired contradiction.
\end{proof}
\begin{oss}[The importance of being $C^1$]
The $C^1$ assumption is crucial to get the condition \eqref{nabla0}.
On the other hand, when $\Omega$ is convex but not $C^1$, then {\it it is no more true} that 
\[
``R= \langle x,\nu_\Omega(x)\rangle,\quad \mbox{ for $\mathcal{H}^{N-1}-$a.\,e.  }x\in\partial\Omega''\qquad \Longrightarrow \qquad \Omega=B_R.
\]
In fact, there are lots of convex sets for which this identity holds true. For example, it is sufficient to take any convex polyhedron, such that each of its faces touches the ball $B_R$. Another example can be found by taking the cone obtained as the convex envelope of $B_R$ and a point $x_0\in\mathbb{R}^N\setminus B_R$, see Figure \ref{fig:2}.
\begin{figure}[h]
\includegraphics[scale=.3]{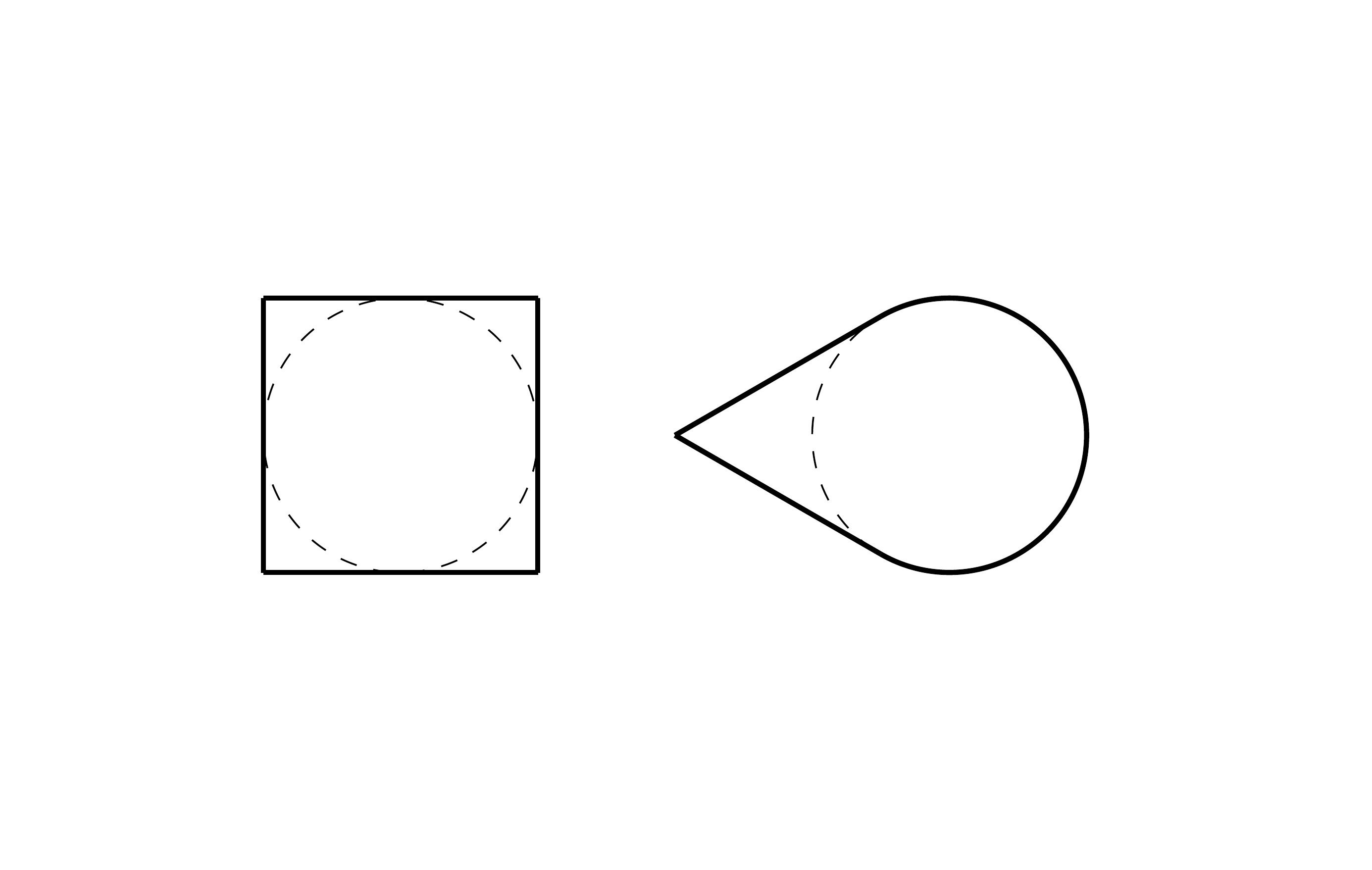}
\caption{Two convex sets for which equality in \eqref{inradiusgen} holds almost everywhere on the boundary.}
\label{fig:2}
\end{figure}
\end{oss}
\subsection{Poincar\'e-Sobolev constants}
For $1\leq q<2^*$ and every open set $\Omega\subset \mathbb{R}^N$, we have defined in \eqref{defla2q} the Poincar\'e-Sobolev constants, which we can interpret as generalizations of the first eigenvalue of the Dirichlet-Laplacian. They can be equivalently characterized as 
\[
\lambda_{2,q}(\Omega)=\inf_{\varphi\in C^\infty_0(\Omega)}\left\{\int_\Omega|\nabla \varphi|^2\,dx\, :\, \|\varphi\|_{L^q(\Omega)}=1\right\}.
\]
If $\Omega$ is bounded or, more generally, has finite measure, the infimum is attained on the homogeneous Sobolev space $\mathcal D^{1,2}_0(\Omega)$, defined as the completion of $C^\infty_0(\Omega)$ with respect to the norm 
\[
u\mapsto \|\nabla u\|_{L^2(\Omega)}.
\]
In this case, a minimizer $u\in\mathcal{D}^{1,2}_0(\Omega)$ of the previous problem weakly solves the Lane-Emden equation
\[
-\Delta u=\lambda_{2,q}(\Omega)\,|u|^{q-2}\,u,\qquad \mbox{ in }\Omega.
\]
As we already recalled in the introduction, the quantity $1/\lambda_{2,1}(\Omega)$ coincides with the torsional rigidity $T(\Omega)$.
\par
From the definition, it is easy to check that these quantities scale as 
\[
\lambda_{2,q}(t\,\Omega)=t^{-2-\frac{2-q}{q} N}\lambda_{2,q}(\Omega),\qquad t>0.
\]
Finally, as it is clear from the statement of the Theorem \ref{teo:lowerbound}, the constants $\pi_{2,q}$ play a fundamental role in our work. These are nothing but the one-dimensional Poincar\'e-Sobolev constants, more precisely they are defined by
\[
\pi_{2,q}=\min_{\varphi\in W^{1,2}((0,1))\setminus\{0\}}{\left\{\frac{\|\varphi'\|_{L^2((0,1))}}{\|\varphi\|_{L^q((0,1))}}\, :\, \varphi(0)=\varphi(1)=0\right\}}.
\]
We refer to~\cite[Appendix~A]{BraBus} and \cite[Section 5]{FL} for more details.
It is worth recalling some explicit values for these constants, see~\cite[Remark~2.4]{BraBus},
\[
\pi_{2,1}=2\,\sqrt{3}\qquad \mbox{ and }\qquad \pi_{2,2}=\pi.
\]
Then it is immediate to see that Makai's upper bound in~\eqref{MPS} coincides the lower bound in~\eqref{HPweak} when $q=1$, while the Hersch-Protter estimate~\eqref{HP} is contained again in~\eqref{HPweak} when $q=2$.

The relation between the constants $\pi_{2,q}$ and  $\lambda_{2,q}$ for the ``slab--type'' sequence $\Omega_L$ is detailed in the Appendix, see Lemma~\ref{lm:slabtype}.
\subsection{The Minkowski functional}
Here we recall the definition and main properties of the {\it Minkowski functional} of a convex set $\Omega\subset \mathbb{R}^N$ such that $0\in\Omega$, denoted by $j_\Omega$. This is defined by
\[
j_\Omega(x):=\inf\Big\{r>0 : x\in r\,\Omega\Big\}.
\]
First of all, by construction it is easily seen that
\begin{equation}
\label{levelset}
\{x\in\mathbb{R}^N\, :\, j_\Omega(x)=t\}=t\,(\partial\Omega),\qquad \mbox{ for every }t>0.
\end{equation}
The main properties of $j_\Omega$ needed for our purposes are summarized in the following
\begin{lm}
\label{lm:mink}
Let $\Omega\subset\mathbb{R}^N$ be an open bounded convex set, such that $0\in\Omega$. The function $j_\Omega$ is a convex Lipschitz and  positively $1$-homogeneous function, i.e.
\[
j_\Omega(t\,x)=t\,j_\Omega(x),\qquad \mbox{ for every } x\in\mathbb{R}^N,\ t>0.
\]
Moreover, $j_\Omega$ is differentiable for $\mathcal{H}^{N-1}-$almost every $x\in \partial\Omega$ and it holds
\begin{equation}
\label{scalar}
\langle x,\nu_\Omega(x)\rangle=\frac{1}{|\nabla j_\Omega(x)|},\qquad \mbox{ for $\mathcal{H}^{N-1}-$a.\,e. } x\in \partial \Omega.
\end{equation}
\end{lm}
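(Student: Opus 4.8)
The plan is to dispatch the three algebraic properties first, then settle the differentiability claim, and finally extract the identity \eqref{scalar}. Positive $1$-homogeneity is immediate from the definition through the substitution $r=t\,s$: one has $j_\Omega(t\,x)=\inf\{r>0:t\,x\in r\,\Omega\}=\inf\{t\,s>0:x\in s\,\Omega\}=t\,j_\Omega(x)$. For convexity I would first record, using \eqref{levelset} together with openness and boundedness of $\Omega$, that $\{j_\Omega<1\}=\Omega$ and $\{j_\Omega\le 1\}=\overline\Omega$; writing $a=j_\Omega(x)$, $b=j_\Omega(y)$ and testing convexity of $\Omega$ on the points $x/(a+\varepsilon)$ and $y/(b+\varepsilon)$ yields the subadditivity $j_\Omega(x+y)\le j_\Omega(x)+j_\Omega(y)$ after letting $\varepsilon\to 0$; a positively $1$-homogeneous subadditive function is convex. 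For the Lipschitz property I would use that $0$ is interior and $\Omega$ is bounded, so $B_{\rho_1}\subset\Omega\subset B_{\rho_2}$ for some $0<\rho_1\le\rho_2$, giving the two-sided bound $|x|/\rho_2\le j_\Omega(x)\le |x|/\rho_1$; a finite convex function is locally Lipschitz, and homogeneity upgrades this to a global Lipschitz estimate.

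The crux is the differentiability statement. Since $j_\Omega$ is finite and convex, it is differentiable Lebesgue-a.e. by Rademacher's theorem; let $\Sigma\subset\mathbb{R}^N\setminus\{0\}$ denote its (Lebesgue-null) non-differentiability set. The key structural fact is that $\Sigma$ is a cone, i.e. $t\,\Sigma=\Sigma$ for every $t>0$: indeed $j_\Omega$ is differentiable at $x$ if and only if it is differentiable at $t\,x$, with $\nabla j_\Omega(t\,x)=\nabla j_\Omega(x)$, because the gradient of a $1$-homogeneous function is $0$-homogeneous. As every ray from the origin meets $\partial\Omega$ exactly once, the map $(t,\omega)\mapsto t\,\omega$ is a bijection of $(0,\infty)\times\partial\Omega$ onto $\mathbb{R}^N\setminus\{0\}$, and by \eqref{levelset} it carries $(1,2)\times(\Sigma\cap\partial\Omega)$ into $\Sigma$. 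The area formula for this Lipschitz parametrization produces the Jacobian factor $t^{N-1}\,\langle\omega,\nu_\Omega(\omega)\rangle$, which is strictly positive for $\mathcal{H}^{N-1}$-a.e.\ $\omega$: choosing $\rho>0$ with $B_\rho\subset\Omega$, Lemma \ref{lm:inradius} gives $\langle\omega,\nu_\Omega(\omega)\rangle\ge\rho>0$. Hence, were $\mathcal{H}^{N-1}(\Sigma\cap\partial\Omega)$ positive, the image would have positive Lebesgue measure, contradicting $|\Sigma|=0$. This forces $\mathcal{H}^{N-1}(\Sigma\cap\partial\Omega)=0$, so $j_\Omega$ is differentiable at $\mathcal{H}^{N-1}$-a.e.\ point of $\partial\Omega$. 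I expect this passage from a Lebesgue-a.e.\ statement in $\mathbb{R}^N$ to an $\mathcal{H}^{N-1}$-a.e.\ statement on the lower-dimensional set $\partial\Omega$ to be the main obstacle.

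It then remains to identify $\nabla j_\Omega$ on the full $\mathcal{H}^{N-1}$-measure subset of $\partial\Omega$ where both $j_\Omega$ is differentiable, with $p:=\nabla j_\Omega(x)$, and $\nu_\Omega(x)$ exists. Euler's identity for the $1$-homogeneous $j_\Omega$, obtained by differentiating $j_\Omega(t\,x)=t\,j_\Omega(x)$ at $t=1$, gives $\langle x,p\rangle=j_\Omega(x)=1$, so in particular $p\neq 0$. Next, the subgradient inequality for the convex function $j_\Omega$, namely $j_\Omega(y)\ge 1+\langle p,y-x\rangle$, evaluated on $y\in\overline\Omega$ where $j_\Omega(y)\le 1$, shows $\langle p,y-x\rangle\le 0$ for all $y\in\overline\Omega$; thus $\{y:\langle p,y-x\rangle=0\}$ is a supporting hyperplane of $\Omega$ at $x$, so $p/|p|$ is an outer normal direction and, matching with the unique outer unit normal, $p=|p|\,\nu_\Omega(x)$. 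Combining this with Euler's identity yields $1=\langle x,p\rangle=|p|\,\langle x,\nu_\Omega(x)\rangle$, which is precisely \eqref{scalar} after dividing by $|p|=|\nabla j_\Omega(x)|$.
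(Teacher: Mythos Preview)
Your proof is correct and follows essentially the same architecture as the paper's: homogeneity from the definition, subadditivity plus homogeneity giving convexity, convexity giving local Lipschitz which homogeneity upgrades to global, Rademacher plus the cone structure of the non-differentiability set for the $\mathcal{H}^{N-1}$-a.e.\ claim on $\partial\Omega$, and Euler's identity for \eqref{scalar}. The only differences are cosmetic: the paper reaches subadditivity via level convexity rather than directly, simply asserts that the cone over a positive-$\mathcal{H}^{N-1}$ subset of $\partial\Omega$ has positive Lebesgue measure (you justify this more carefully via the area formula and Lemma~\ref{lm:inradius}), and identifies the direction of $\nabla j_\Omega$ by appealing to \eqref{levelset} (the boundary is a level set, so the gradient is normal) rather than through the subgradient inequality.
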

\begin{proof}
For completeness, we sketch the proof of these classical facts.
The homogeneity of $j_\Omega$ is a straightforward consequence of its definition. Moreover, by still using its definition, it is not difficult to see that $j_\Omega$ is level convex, i.e.
\[
j_\Omega((1-t)\,z+t\,w)\le \max\{j_\Omega(z),\, j_\Omega(w)\},\qquad \mbox{ for every } z,w\in\mathbb{R}^N, t\in[0,1].
\]
By using this property with  
\[
z=\frac{x}{j_\Omega(x)},\quad w=\frac{y}{j_\Omega(y)},\quad t=\frac{j_\Omega(y)}{j_\Omega(x)+j_\Omega(y)}
\]
and using the positive $1-$homogeneity, we then get that $j_\Omega$ is sub-additive. Finally, from this we obtain
\[
j_\Omega((1-t)\,x+t\,y)\le j_\Omega((1-t)\,x)+j_\Omega(t\,y)=(1-t)\,j_\Omega(x)+t\,j_\Omega(y),
\] 
i.e. $j_\Omega$ is convex.
\par
As a convex function, it is automatically locally Lipschitz. By positive $1-$homogeneity, we can upgrade this information to a global Lipschitz continuity. In any case, we have that $j_\Omega$ is differentiable almost everywhere in $\mathbb{R}^N$.
\par
To prove that $j_\Omega$ is differentiable almost everywhere on $\partial\Omega$, we use again the positive $1$-homogeneity: indeed, if there exists $\Sigma\subset\partial\Omega$ such that
\[
\mathcal{H}^{N-1}(\Sigma)>0,
\]
and $j_\Omega$ is not differentiable on $\Sigma$, then $j_\Omega$ would automatically be not differentiable on the cone generated by $\Sigma$, i.e.
\[
C_\Sigma=\{x\in\Omega\, :\, x=t\,y \mbox{ for some } t\in[0,1],\ y\in\Sigma \}.
\]
But this would be a set with positive $N-$dimensional measure, on which $j_\Omega$ is not differentiable, thus giving a contradiction.
\par 
Finally, by differentiating in $t$ the identity
\[
j_\Omega(t\,x)=t\,j_\Omega(x),
\]
and taking $t=1$, we get
\[
\langle\nabla j_\Omega(x),x\rangle=j_\Omega(x).
\]
By recalling \eqref{levelset}, the latter implies 
\[
|\nabla j_\Omega(x)|\,\langle \nu_\Omega(x),x\rangle=1,\qquad \mbox{ for $\mathcal{H}^{N-1}-$a.\,e. } x\in\partial\Omega.
\]
This concludes the proof.
\end{proof}

\section{Lower bound}\label{sect:lower}

In order to prove Theorem \ref{teo:lowerbound}, the following technical result will be useful. The proof is standard, we give it for completeness.
\begin{lm}
\label{lm:composizione}
Let $q\geq 1$ and $f\in C^2([a,b])$ be a non-decreasing function, such that
\[
-f''=C\,f^{q-1},\ \mbox{ in } [a,b],\qquad \mbox{ with } f(a)=0.
\]
Let $\Omega\subset\mathbb{R}^N$ be an open set and let $u\in W^{1,2}(\Omega)\cap L^\infty(\Omega)$ be a weakly superhamonic function, i.e.
\begin{equation}
\label{superharmonic}
\int_\Omega \langle \nabla u,\nabla \varphi\rangle\,dx\ge 0,\qquad \mbox{ for every } \varphi\in C^\infty_0(\Omega) \mbox{ with }\varphi\ge 0.
\end{equation}
Let us assume in addition that 
\[
a\le u(x)\le b,\qquad \mbox{ for a.\,e. }x\in\Omega.
\]
Then the composition $\phi=f\circ u$ satisfies $-\Delta \phi\ge C\,\phi^{q-1}\,|\nabla u|^2$
in weak sense, i.\,e.
\[
\int_\Omega \langle \nabla \phi,\nabla \varphi\rangle\,dx\ge C\,\int_\Omega \phi^{q-1}\,|\nabla u|^2\,\varphi\,dx,
\]
for every $\varphi\in C^\infty_0(\Omega)$ with $\varphi\ge 0$.
\end{lm}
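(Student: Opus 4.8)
The plan is to reduce everything to the weak superharmonicity \eqref{superharmonic} of $u$, via the chain and product rules for Sobolev functions. Formally, writing $\phi=f\circ u$ and differentiating twice, one has $\nabla\phi=f'(u)\,\nabla u$ and $-\Delta\phi=-f''(u)\,|\nabla u|^2-f'(u)\,\Delta u$; using the ODE $-f''=C\,f^{q-1}$ together with $f'\ge 0$ (monotonicity) and $-\Delta u\ge 0$ (superharmonicity), the claimed inequality is immediate. The real content is to make this heuristic rigorous in the weak formulation.

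First I would record the chain rule in Sobolev spaces. Since $u\in W^{1,2}(\Omega)\cap L^\infty(\Omega)$ takes values in $[a,b]$ and $f\in C^2([a,b])$, the standard chain rule yields $\phi=f(u)\in W^{1,2}(\Omega)\cap L^\infty(\Omega)$ with $\nabla\phi=f'(u)\,\nabla u$, and likewise $f'(u)\in W^{1,2}(\Omega)\cap L^\infty(\Omega)$ with $\nabla\big(f'(u)\big)=f''(u)\,\nabla u$. Observe also that $\phi=f(u)\ge f(a)=0$, because $f$ is non-decreasing and $u\ge a$, so $\phi^{q-1}$ is well-defined and nonnegative.

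The core is then a pointwise identity. For $\varphi\in C^\infty_0(\Omega)$ with $\varphi\ge 0$, the product rule gives, almost everywhere in $\Omega$,
\[
\langle\nabla u,\nabla(f'(u)\,\varphi)\rangle=f''(u)\,\varphi\,|\nabla u|^2+f'(u)\,\langle\nabla u,\nabla\varphi\rangle=f''(u)\,\varphi\,|\nabla u|^2+\langle\nabla\phi,\nabla\varphi\rangle.
\]
Rearranging and substituting $-f''(u)=C\,\phi^{q-1}$ yields
\[
\langle\nabla\phi,\nabla\varphi\rangle=\langle\nabla u,\nabla(f'(u)\,\varphi)\rangle+C\,\phi^{q-1}\,|\nabla u|^2\,\varphi.
\]
Integrating over $\Omega$, the desired conclusion follows as soon as one knows that $\int_\Omega\langle\nabla u,\nabla(f'(u)\,\varphi)\rangle\,dx\ge 0$.

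The main obstacle, and the only genuinely delicate point, is justifying this last inequality. The natural test function $\psi:=f'(u)\,\varphi$ is nonnegative (the product of the nonnegative $f'(u)$, by monotonicity of $f$, and $\varphi\ge 0$) and lies in $W^{1,2}(\Omega)$ with compact support contained in $\mathrm{supp}(\varphi)$, but it is \emph{not} smooth, so \eqref{superharmonic} does not apply verbatim. I would overcome this by a routine mollification argument: setting $\psi_\varepsilon=\psi*\rho_\varepsilon$ with a nonnegative mollifier $\rho_\varepsilon$, for $\varepsilon$ small one has $\psi_\varepsilon\in C^\infty_0(\Omega)$, $\psi_\varepsilon\ge 0$, and $\psi_\varepsilon\to\psi$ in $W^{1,2}(\Omega)$. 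Applying \eqref{superharmonic} to each $\psi_\varepsilon$ and passing to the limit $\varepsilon\to 0$ extends the superharmonicity inequality to $\psi$, which closes the argument.
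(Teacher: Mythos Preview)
Your proof is correct and follows essentially the same line as the paper's: both plug the nonnegative test function $\psi=f'(u)\,\varphi$ into the weak superharmonicity of $u$, expand via the chain and product rules, and substitute the ODE $-f''=C\,f^{q-1}$. The paper records the density step (that $\psi$ is only $W^{1,2}$ with compact support, hence admissible by approximation) in a footnote, while you spell out the mollification explicitly; otherwise the arguments are the same.
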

\begin{proof}
We take $\eta\in C^\infty_0(\Omega)$ non-negative and insert in \eqref{superharmonic} the test function\footnote{Observe that this $\varphi$ is only a $W^{1,2}$ function with compact support in $\Omega$, but by a standard density argument it is clearly admissible.}
\[
\varphi=f'(u)\,\eta\ge 0.
\] 
We obtain
\[
\begin{split}
0\le \int_\Omega \langle \nabla u,\nabla (f'(u)\,\eta)\rangle\,dx&=\int_\Omega |\nabla u|^2\,f''(u)\,\eta\,dx\\
&+\int_\Omega \langle f'(u)\,\nabla u,\nabla \eta\rangle\,dx\\
&=-C\,\int_\Omega |\nabla u|^2\,f(u)^{q-1}\,\eta\,dx\\
&+\int_\Omega \langle \nabla f(u),\nabla\eta\rangle\,dx.
\end{split}
\]
By recalling the definition of $\phi=f\circ u$, this gives the desired result. 
\end{proof}
We are now in position to prove Theorem \ref{teo:lowerbound}.
\begin{proof}[Proof of Theorem \ref{teo:lowerbound}]
We divide the proof in three parts.
\vskip.2cm\noindent
{\bf 1. Inequality for $1\le q\le 2$.} We adapt the trick of \cite{Ka} for proving the Hersch-Protter inequality \eqref{HP}, i.\,e. for the case $q=2$.
We take $v\in W^{1,2}((-1,0))$ to be a positive solution of
\begin{equation}
\label{min1d}
\min_{\varphi\in W^{1,2}((-1,0))}\left\{\int_{-1}^0 |\varphi'|^2\,dt\, :\, \int_{-1}^0 |\varphi|^q\,dt=1 \ \mbox{and }\  \varphi(-1)=0\right\}.
\end{equation}
We can assume that $v$ is non-decreasing: indeed, if this were not the case, we could consider the new function
\[
w(t)=\int_{-1}^t |v'(\tau)|\,d\tau,
\]
which is positive and non-decreasing by construction and such that $w(-1)=0$. Moreover, we have 
\[
\int_{-1}^0 |w'(t)|^2\,dt=\int_{-1}^0 |v'(t)|^2\,dt,
\]
and
\[
w(t)^q=\left(\int_{-1}^t |v'(\tau)|\,d\tau\right)^q\ge \left|\int_{-1}^t v'(\tau)\,d\tau\right|^q=|v(t)|^q,
\]
so that 
\[
\int_{-1}^0 w^q\,dt\ge 1.
\]
This implies that $\widetilde{w}=w\,\|w\|_{L^q((-1,0))}^{-1}$ is another positive minimizer of \eqref{min1d}, and we can work with it.
\par
By recalling that the minimal value \eqref{min1d} coincides with $(\pi_{2,q}/2)^2$ (see \cite[Lemma A.1]{BraBus}), we get that $v$ is a positive and non-decreasing function that solves the following mixed problem
\[
\left\{\begin{array}{rllc}
-v''&=&\left(\dfrac{\pi_{2,q}}{2}\right)^2\,v^{q-1},& \mbox{ in }(-1,0)\\
&&&\\
v(-1)&=&v'(0)=0.
\end{array}
\right.
\]
Actually, by using the equation it is easily seen that $v$ is of class $C^2$ on the interval $[-1,0]$.
We then consider the function (recall the notation of Subsection \ref{subsec:inradius})
\[
\phi(x)=v\left(\frac{d_\Omega(x)}{R_\Omega}-1\right),
\]
and observe that 
\begin{equation}
\label{eikonal}
|\nabla d_\Omega|^2=1,\qquad \mbox{ a.\,e. in }\Omega,
\end{equation}
and that $d_\Omega$ is weakly superharmonic in $\Omega$, thanks to the convexity of the set.
We can then apply Lemma \ref{lm:composizione} with the choices
\[
f=v\qquad \mbox{ and }\qquad u=\frac{d_\Omega}{R_\Omega}-1,
\] 
and obtain that
\[
\left(\frac{\pi_{2,q}}{2\,R_\Omega}\right)^2\,\int_\Omega \phi^{q-1}\,\varphi\,dx\le \int_\Omega \langle \nabla \phi,\nabla\varphi\rangle\,dx,\quad \mbox{ for every } \varphi\in C^\infty_0(\Omega),\, \varphi\ge 0.
\]
By a standard density argument, in the previous equation we can also admit test functions in $W^{1,2}$ with compact support in $\Omega$.
In particular, by taking $\varphi=\eta^2/\phi$ 
with $\eta\in C^\infty_0(\Omega)$ and using Picone's inequality, we get
\[
\begin{split}
\left(\frac{\pi_{2,q}}{2\,R_\Omega}\right)^2\,\int_\Omega \phi^{q-2}\,\eta^2\,dx&\le \int_\Omega \left\langle \nabla \phi,\nabla\left(\frac{\eta^2}{\phi}\right)\right\rangle\,dx\le \int_\Omega |\nabla\eta|^2\,dx.
\end{split}
\]
This in particular implies that
\begin{equation}
\label{pre}
\left(\frac{\pi_{2,q}}{2\,R_\Omega}\right)^2\le \frac{\displaystyle \int_\Omega |\nabla\eta|^2\,dx}{\displaystyle \int_\Omega \phi^{q-2}\,\eta^2\,dx}.
\end{equation}
We now observe that by H\"older's inequality we have
\[
\int_\Omega \phi^{q-2}\,\eta^2\,dx\ge \frac{\displaystyle\left(\int_\Omega \eta^q\,dx\right)^\frac{2}{q}}{\displaystyle\left(\int_\Omega \phi^q\,dx\right)^\frac{2-q}{q}}.
\]
By using this in \eqref{pre} and then taking the infimum over $\eta$, we obtain
\begin{equation}
\label{pre2}
\left(\frac{\pi_{2,q}}{2\,R_\Omega}\right)^2\le \lambda_{2,q}(\Omega)\,\left(\int_\Omega \phi^q\,dx\right)^\frac{2-q}{q}.
\end{equation}
If $q=2$ the proof is over and we obtain the Hersch-Protter estimate. If $1\le q<2$, we are now left with estimating from above the $L^q$ norm of $\phi$.
By using the definition of $\phi$, the Coarea Formula and property \eqref{eikonal}, we get
\[
\begin{split}
\int_\Omega \phi^q\,dx&=\int_0^{R_\Omega} \left(v\left(\frac{t}{R_\Omega}-1\right)\right)^q\,P(\Omega_t)\,dt,
\end{split}
\]
where we set
\[
\Omega_t=\{x\in\Omega\, :\, d_\Omega(x)>t\},
\]
and by $P(\Omega_t)$ we denote the perimeter of the set $\Omega_t$.
We recall that the perimeter is monotone increasing with respect to set inclusion, in the class of convex sets (see \cite[Lemma 2.2.2]{BB}). Since $\Omega_t$ is convex, we get that the function $\psi(t)=P(\Omega_t)$ is monotone decreasing. Moreover, if we define 
\[
\xi(t)=\int_0^t \left(v\left(\frac{\tau}{R_\Omega}-1\right)\right)^q\,d\tau,\qquad \mbox{ for } t\in [0,R_\Omega],
\]
then it is easily seen that the pair $(\xi,\psi)$ verifies the assumptions of Lemma \ref{lm:tecnico}. Indeed, by using that $v$ is monotone non-decreasing on $[-1,0]$, we can infer that
\[
\xi(t)\le t\,\left(v\left(\frac{t}{R_\Omega}-1\right)\right)^q=t\,\xi'(t),
\]
which entails that $t\mapsto \xi(t)/t$ is increasing. By applying Lemma \ref{lm:tecnico}, we thus get
\[
\begin{split}
\int_\Omega \phi^q\,dx&=\int_0^{R_\Omega} \left(v\left(\frac{t}{R_\Omega}-1\right)\right)^q\,P(\Omega_t)\,dt\\
&\le \frac{\xi(R_\Omega)}{R_\Omega}\,\int_0^{R_\Omega} P(\Omega_t)\,dt=\frac{\xi(R_\Omega)}{R_\Omega}\,|\Omega|.
\end{split}
\]
It is only left to observe that by definition of $\xi$, with a simple change of variable we have
\[
\frac{\xi(R_\Omega)}{R_\Omega}=\frac{1}{R_\Omega}\,\int_0^{R_\Omega} \left(v\left(\frac{\tau}{R_\Omega}-1\right)\right)^q\,d\tau=\int_{-1}^0 v^q\,ds=1,
\]
where we have used that the function $v$ has unit $L^q$ norm on the interval $[-1,0]$.
This shows that
\[
\int_\Omega \phi^q\,dx\leq |\Omega|.
\]
Finally, by spending this information into \eqref{pre2}, we get the desired estimate.
\vskip.2cm\noindent
{\bf 2. Sharpness for $1\le q\le 2$.} To prove the sharpness we consider the ``slab--type'' sequence 
\[
\Omega_L=\left(-\frac{L}{2},\frac{L}{2}\right)^{N-1}\times (0,1).
\]
By Lemma \ref{lm:slabtype} below, we know that
\[
\lambda_{2,q}(\Omega_L)\sim \frac{\Big(\pi_{2,q}\Big)^2}{L^{(N-1)\,\frac{2-q}{q}}},\qquad \mbox{ as } L\to +\infty.
\]
On the other hand, by construction it is easy to check that
\[
R_{\Omega_L}=\frac{1}{2}\qquad \mbox{and }\qquad  |\Omega_L|\sim L^{N-1}, \qquad \mbox{ as } L\to +\infty.
\]
Finally, we obtain
\[
 \lambda_{2,q}(\Omega_L)\,|\Omega_L|^\frac{2-q}{q}\sim \left(\frac{\pi_{2,q}}{2\,R_{\Omega_L}}\right)^2,\qquad \mbox{ as }L\to+\infty,
\]
so we have proved the sharpness of the estimate \eqref{HPweak}.
\vskip.2cm\noindent
{\bf 3. The case $2<q<2^*$.} We still take the family of sets $\Omega_L$ as above. In this case, by Lemma \ref{lm:slabtype} we have 
\[
\lambda_{2,q}(\Omega_L)\sim \lambda_{2,q}(\mathbb{R}^{N-1}\times(0,1))>0,\qquad \mbox{ as } L\to +\infty.
\]
Thus, we now get
\[
R_{\Omega_L}^2\,\lambda_{2,q}(\Omega_L)\,|\Omega_L|^\frac{2-q}{q}\sim \frac{1}{4}\, \lambda_{2,q}(\mathbb{R}^{N-1}\times(0,1))\,L^{(N-1)\,\frac{2-q}{q}},\qquad \mbox{ as } L\to +\infty,
\]
and this quantity converges to $0$, thanks to the fact that $2-q<0$. This concludes the proof.
\end{proof}

\section{Upper bound}\label{sect:upper}

The proof of the upper bound is based on the use of a clever test function. The idea is quite similar to the so-called {\it method of interior parallels}. The latter uses test functions of the form
\[
u(x)=\varphi(d_\Omega(x)).
\]
In our case, on the contrary, test functions of the form 
\[
u(x)=\varphi(j_{\Omega}(x)),
\] 
will do the job. As recalled in Section \ref{sec:prelim}, $j_\Omega$ is the Minkowski functional of a convex set $\Omega\subset \mathbb{R}^N$ such that $0\in\Omega$. Its relevant properties needed in the following proof are contained in Lemma \ref{lm:mink}.
\begin{proof}[Proof of Theorem \ref{teo:upperbound}]
We divide the proof in two cases, depending on whether $q<2$ or $q\ge 2$.
\vskip.2cm\noindent
{\bf Case $2\leq q <2^*$.} In this case, the proof is trivial. It is sufficient to observe that, for all $\Omega\subset\mathbb{R}^N$ open and bounded, we have
\[
R_\Omega^2\,\lambda_{2,q}(\Omega)\,|\Omega|^\frac{2-q}{q}=\left(\lambda_{2,q}(\Omega)\,R_\Omega^{2-\frac{2-q}{q}\,N}\right)\,\left(\frac{|\Omega|}{R_\Omega^N}\right)^\frac{2-q}{q},
\]
and use that both quantities are (uniquely) maximized by balls among open sets. In fact, by monotonicity of $\lambda_{2,q}$ with respect to set inclusion, we have 
\[
\lambda_{2,q}(\Omega)\leq \lambda_{2,q}(B_{R_\Omega})=\lambda_{2,q}(B_1)\,R_\Omega^{2-\frac{2-q}{q}\,N}.
\]
On the other hand, since $\Omega$ contains a ball of volume $\omega_N\,R_\Omega^N$, it is clear that 
\[
\frac{R_\Omega^N}{|\Omega|}\leq \frac{1}{\omega_N}.
\]
By recalling that $2-q\le 0$, we get the conclusion.
\vskip.2cm\noindent
{\bf Case $1\leq q< 2$.} By definition of inradius, we have that $\Omega$ contains a ball of radius $R_\Omega$. Without loss of generality, we can assume that such a ball is centered at the origin. 
\par
We take $u\in \mathcal D^{1,2}_0(B_1)$ to be optimal for the variational problem defining $\lambda_{2,q}(B_1)$. Without loss of generality, we can take $u$ to be positive. Moreover, we know that it must be a radially symmetric function (see \cite[Theorem 3]{Kaw}). Thus, there exists a $C^1$ function $f:[0,1]\rightarrow [0,+\infty)$ such that 
\[
u(x)=f(|x|), \qquad \mbox{ for every }x\in B_1.
\]
The previous properties of $u$ entail that $f$ is decreasing and that $f'(0)=0$.
By using spherical coordinates, we have 
\begin{equation}
\label{sfera}
\lambda_{2,q}(B_1)=\frac{N\,\omega_N\,\displaystyle \int_0^1|f'(t)|^2\,t^{N-1}\,dt}{\left(N\,\omega_N\,\displaystyle\int_0^1f(t)^q\,t^{N-1}\,dt\right)^{\frac{2}{q}}}.
\end{equation}
We then use the composition $f\circ j_\Omega$ as a test function in the Rayleigh quotient defining $\lambda_{2,q}(\Omega)$. Indeed, $f$ is $C^1$, $j_\Omega$ is Lipschitz and observe that we have 
\[
f(j_\Omega(x))=f(1)=0,\qquad \mbox{ for every }x\in\partial\Omega,
\] 
so that $f\circ j_\Omega\in \mathcal D^{1,2}_0(\Omega)$.
We first compute the $L^q$ norm of this test function. By using the Coarea Formula and the property \eqref{levelset}, we have
\[
\int_\Omega f(j_\Omega)^q\,dx=\int_0^1 f(t)^q\,\left(\int_{t\,\partial\Omega}\frac{1}{|\nabla j_\Omega(x)|}\,d\mathcal H^{N-1}\right)\,dt.
\]
By using the change of variable $x=t\,y$ and the fact that $\nabla j_\Omega$ is positively $0-$homogeneous, we get
\[
\int_\Omega f(j_\Omega)^q\,dx=\left(\int_0^1f(t)^q\,t^{N-1}\,dt\right)\,\left(\int_{\partial \Omega}\frac{1}{|\nabla j_\Omega(x)|}\,d\mathcal H^{N-1}\right).
\]
If we further use \eqref{scalar} and the Divergence Theorem, we finally get
\begin{equation}
\label{denominatore}
\begin{split}
\int_\Omega f(j_\Omega)^q\,dx&=\left(\int_0^1f(t)^q\,t^{N-1}\,dt\right)\,\left(\int_{\partial \Omega}\langle x,\nu_\Omega\rangle \,d\mathcal H^{N-1}\right)\\
&=N\,|\Omega|\,\int_0^1\,f(t)^q\,t^{N-1}\,dt.
\end{split}
\end{equation}
%
We proceed similarly, in order to estimate the Dirichlet integral. By using again the Coarea Formula, \eqref{levelset} and the change of variable $x=t\,y$ as above, we have 
\[
\int_\Omega |\nabla f(j_\Omega)|^2\,dx=\left(\int_0^1 |f'(t)|^2\,t^{N-1}\,dt\right)\,\left(\int_{\partial\Omega}|\nabla j_\Omega|\,d\mathcal H^{N-1}\right).
\]
We use again \eqref{scalar}, so to obtain
\begin{equation}
\label{lunedi}
\int_\Omega |\nabla f(j_\Omega)|^2\,dx=\left(\int_0^1 |f'(t)|^2\,t^{N-1}\,dt\right)\,\left(\int_{\partial\Omega}\frac{1}{\langle x,\nu_\Omega\rangle}\,d\mathcal H^{N-1}\right).
\end{equation}
We now estimate the scalar product. By Lemma \ref{lm:inradius}, we have
\begin{equation}
\label{inradius}
R_\Omega\le \langle x,\nu_\Omega(x)\rangle,\qquad \mbox{ for $\mathcal{H}^{N-1}-$a.\,e. }x\in\partial\Omega.
\end{equation}
Thus we get the following lower bound
\[
\langle x,\nu_\Omega\rangle=\frac{\langle x,\nu_\Omega\rangle^2}{\langle x,\nu_\Omega\rangle}\ge \frac{R_\Omega^2}{\langle x,\nu_\Omega\rangle}, \qquad \mbox{ for $\mathcal{H}^{N-1}-$a.\,e.  }x\in\partial\Omega.
\]
By inserting this into \eqref{lunedi}, we get
\begin{equation}
\label{numeratore}
\begin{split}
\int_\Omega |\nabla f(j_\Omega)|^2\,dx&\le \frac{1}{R_\Omega^2}\,\left(\int_0^1 |f'(t)|^2\,t^{N-1}\,dt\right)\,\left(\int_{\partial\Omega}\langle x,\nu_\Omega\rangle\,d\mathcal H^{N-1}\right)\\
&=\frac{N\,|\Omega|}{R_\Omega^2}\,\left(\int_0^1 |f'(t)|^2\,t^{N-1}\,dt\right).
\end{split}
\end{equation}
Now, by putting together \eqref{numeratore} and \eqref{denominatore}, we obtain
\begin{equation}\label{ineqfinalupperbound}
\begin{split}
\lambda_{2,q}(\Omega)&\leq \frac{\displaystyle\int_\Omega|\nabla f(j_\Omega)|^2\,dx}{\displaystyle\left(\int_\Omega f(j_\Omega)^q\,dx\right)^{\frac{2}{q}}}\\
&\leq \frac{\omega_N^{\frac{2}{q}-1}\,|\Omega|^{1-\frac{2}{q}}}{R_\Omega^2}\,\frac{\left(N\,\omega_N\,\displaystyle \int_0^1|f'(t)|^2\,t^{N-1}\,dt\right)}{\left(N\,\omega_N\,\displaystyle\int_0^1f(t)^q\,t^{N-1}\,dt\right)^{\frac{2}{q}}}\\
&=\frac{\omega_N^{\frac{2}{q}-1}\,|\Omega|^{1-\frac{2}{q}}}{R_\Omega^2}\,\lambda_{2,q}(B_1),
\end{split}
\end{equation}
where in the last equality we used \eqref{sfera}.
By rearranging the terms, it is immediate to see that we have proved the claimed inequality \eqref{HPweakup}.
\par
As for the equality cases, it is easy to see that if equality holds in~\eqref{HPweakup} for an open bounded convex set $\Omega$ containing the origin, then all the inequalities in~\eqref{ineqfinalupperbound} must become equalities and in particular we deduce that
\begin{equation}
\label{fottima}
f\circ j_\Omega \mbox{ is optimal for }\lambda_{2,q}(\Omega),
\end{equation}
and
\begin{equation}
\label{dai}
R_\Omega= \langle x,\nu_\Omega(x)\rangle,\qquad \mbox{ for $\mathcal{H}^{N-1}-$a.\,e. }x\in\partial\Omega.
\end{equation}
By optimality, the first condition \eqref{fottima} implies that $f\circ j_\Omega$ is a weak solution of 
\[
-\Delta u=C\,u^{q-1},\qquad \mbox{ in }\Omega.
\]
In particular, by Elliptic Regularity this implies that $f\circ j_\Omega$ is locally smooth in $\Omega$, say $C^1$. By writing 
\[
j_\Omega(x)=f^{-1}\circ (f\circ j_\Omega)(x),\qquad \mbox{ for }x\in\Omega,
\] 
and observing that $f^{-1}$ is $C^1$ except at zero, we get that 
\[
j_\Omega\in C^1(\Omega\setminus\{0\}).
\]
This in turn implies that a set $\Omega$ attaining the equality in \eqref{HPweakup} is necessarily of class $C^1$. By using the second information \eqref{dai} and the identification of equality cases in Lemma \ref{lm:inradius}, we finally obtain that $\Omega$ must be a ball.
\end{proof}

\begin{oss}
In \cite{BFNT} the following scale invariant quantity has been studied
\[
\frac{\lambda(\Omega)\,T(\Omega)}{|\Omega|},
\]
among the class of open bounded convex sets. In \cite[Theorem 1.4]{BFNT} the authors give the following lower bounds
\[
\frac{\lambda(\Omega)\,T(\Omega)}{|\Omega|}\ge \left(\frac{\pi}{2}\right)^2\,\frac{1}{N^{N+2}\,(N+2)},\qquad \mbox{ for } N\ge 3,
\]
and
\[
\frac{\lambda(\Omega)\,T(\Omega)}{|\Omega|}\ge \left(\frac{\pi}{2}\right)^2\,\frac{1}{12},\qquad \mbox{ for } N=2.
\]
By recalling that in our notation 
\[
\lambda(\Omega)=\lambda_{2,2}(\Omega)\qquad \mbox{ and }\qquad T(\Omega)=\frac{1}{\lambda_{2,1}(\Omega)},
\]
we can rewrite the previous functional as
\[
\frac{\lambda(\Omega)\,T(\Omega)}{|\Omega|}=\frac{\lambda_{2,2}(\Omega)\,R_\Omega^2}{R_\Omega^2\,\lambda_{2,1}(\Omega)\, |\Omega|}.
\]
If we now use the Hersch-Protter inequality to estimate the numerator from below and Theorem \ref{teo:upperbound} with $q=1$ to estimate the denominator from above, we get
\[
\frac{\lambda(\Omega)\,T(\Omega)}{|\Omega|}=\frac{\lambda_{2,2}(\Omega)\,R_\Omega^2}{R_\Omega^2\,\lambda_{2,1}(\Omega)\, |\Omega|}\ge \left(\frac{\pi}{2}\right)^2\,\frac{1}{\lambda_{2,1}(B_1)\,\omega_N}.
\]
By further using that
\[
\frac{1}{\lambda_{2,1}(B_1)\,\omega_N}=\frac{T(B_1)}{\omega_N}=\frac{1}{N\,(N+2)},
\]
we end up with
\[
\frac{\lambda(\Omega)\,T(\Omega)}{|\Omega|}\ge \left(\frac{\pi}{2}\right)^2\,\frac{1}{N\,(N+2)},
\]
which {\it improves strictly} the lower bound of \cite[Theorem 1.4]{BFNT}, in every dimension $N\ge 2$. We refer to \cite[Theorems 1.4 \& 1.5]{BFNT2} for a finer lower bound in a restricted class of convex planar sets, as well as to \cite[Conjecture 4.2]{BBP} for the conjectured sharp lower bound.
\end{oss}

\section{Further estimates}
\label{sect:further}

Our main results can be seen as a double-sided sharp estimate on the shape functional
\[
\Omega\mapsto R^2_{\Omega}\,\lambda_{2,q}(\Omega)\,|\Omega|^\frac{2-q}{q},
\]
in the class of open and bounded convex sets. In a natural way, one could ask whether a similar result can be obtained for the more general shape functional
\[
\Omega\mapsto R^\beta_{\Omega}\,\lambda_{2,q}(\Omega)\,|\Omega|^\alpha,
\]
where $\alpha,\beta\in\mathbb{R}$ are two arbitary exponents. 
\par
Of course, the two exponents $\alpha,\beta$ must satisfy some restrictions. The first one is that of {\it scale invariance}. This imposes that we must have
\[
\beta=2-N\,\left(\alpha-\frac{2-q}{q}\right).
\]
Then we set
\begin{equation}
\label{infolo}
\mathfrak{m}(\alpha):=\inf\left\{R^{2-N\,\left(\alpha-\frac{2-q}{q}\right)}_{\Omega}\lambda_{2,q}(\Omega)\,|\Omega|^\alpha : \Omega\subset\mathbb{R}^N \mbox{ open bounded convex}\right\},
\end{equation}
and 
\begin{equation}
\label{supolo}
\mathfrak{M}(\alpha)=\sup\left\{R^{2-N\,\left(\alpha-\frac{2-q}{q}\right)}_{\Omega}\lambda_{2,q}(\Omega)\,|\Omega|^\alpha : \Omega\subset\mathbb{R}^N \mbox{ open bounded convex}\right\}.
\end{equation}
Observe that our Theorems \ref{teo:lowerbound} and \ref{teo:upperbound} correspond to $\alpha=(2-q)/q$. For the quantity \eqref{infolo} we have the following
\begin{prop}[Minimization]
\label{lm:m}
Let $1\le q<2^*$ and let $\alpha\in\mathbb{R}$ be an exponent. We have 
\[
\mathfrak{m}(\alpha)>0\qquad \Longleftrightarrow\qquad \alpha\ge \max\left\{\frac{2-q}{q},0\right\}.
\]
Moreover, if\,\footnote{Observe that 
\[
\frac{2-q}{q}+\frac{2}{N}>0,
\]
thanks to the fact that $1\le q<2^*$.}
\[
\alpha\ge \frac{2-q}{q}+\frac{2}{N},
\]
balls uniquely minimize \eqref{infolo}, even when the convexity assumption is dropped.
\end{prop}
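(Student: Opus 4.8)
The plan is to write the scale-invariant functional in \eqref{infolo} as a product of two scale-invariant factors, each of which is a quantity whose range over convex sets is already controlled by the available results, and then to read off the sign of $\mathfrak m(\alpha)$ from the exponents. Throughout I set $\gamma=\alpha-\frac{2-q}{q}$ and write $R=R_\Omega$, $V=|\Omega|$, $L=\lambda_{2,q}(\Omega)$, so that the quantity in \eqref{infolo} is $\mathcal F_\alpha(\Omega)=R^{2-N\gamma}\,L\,V^{\alpha}$. A direct check of the exponents gives the three algebraic identities
\[
\mathcal F_\alpha(\Omega)=\Big(R^{2}\,L\,V^{\frac{2-q}{q}}\Big)\Big(\tfrac{R^{N}}{V}\Big)^{-\gamma}=\Big(L\,R^{2+\frac{2-q}{q}N}\Big)\Big(\tfrac{R^{N}}{V}\Big)^{-\alpha}=\Big(L\,V^{\frac2N+\frac{2-q}{q}}\Big)\Big(\tfrac{R^{N}}{V}\Big)^{\frac2N-\gamma}.
\]
The ratio appearing in all three is universally constrained by $0<R^{N}/V\le 1/\omega_N$, since $\Omega$ contains a ball of radius $R$; this inequality is an equality exactly for balls, and needs no convexity.

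For the positivity direction, suppose first $1\le q\le 2$ and $\alpha\ge\frac{2-q}{q}$, i.e.\ $\gamma\ge0$. Using the first factorization, Theorem \ref{teo:lowerbound} bounds the leading factor from below by $(\pi_{2,q}/2)^2>0$, while $(R^N/V)^{-\gamma}\ge\omega_N^{\gamma}$ because $R^N/V\le 1/\omega_N$ and $\gamma\ge0$; multiplying these two pointwise lower bounds gives $\mathfrak m(\alpha)\ge(\pi_{2,q}/2)^2\,\omega_N^{\gamma}>0$. For $2\le q<2^*$ and $\alpha\ge0$ I would instead use the second factorization: the Hersch–Protter inequality \eqref{HPq} bounds $L\,R^{2+\frac{2-q}{q}N}$ from below by $C_{N,q}>0$ on convex sets, and $(R^N/V)^{-\alpha}\ge\omega_N^{\alpha}$ since $\alpha\ge0$, so $\mathfrak m(\alpha)\ge C_{N,q}\,\omega_N^{\alpha}>0$. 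These two regimes overlap at $q=2$ and together cover $\alpha\ge\max\{\frac{2-q}{q},0\}$. The split is forced: for $q>2$ the first factor $R^2LV^{(2-q)/q}$ has infimum $0$ (second part of Theorem \ref{teo:lowerbound}), while for $q<2$ the second factor $L\,R^{2+(2-q)/qN}$ is not bounded below, so neither factorization alone works for all $q$.

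For the vanishing direction I would test \eqref{infolo} on the slab sequence $\Omega_L$, for which $R_{\Omega_L}=\tfrac12$ and $|\Omega_L|\sim L^{N-1}$. By Lemma \ref{lm:slabtype}, $\lambda_{2,q}(\Omega_L)\sim(\pi_{2,q})^2\,L^{-(N-1)(2-q)/q}$ when $q\le2$ and $\lambda_{2,q}(\Omega_L)\to\lambda_{2,q}(\mathbb R^{N-1}\times(0,1))>0$ when $q>2$. A direct computation then yields $\mathcal F_\alpha(\Omega_L)\sim c\,L^{(N-1)\gamma}$ in the first case and $\mathcal F_\alpha(\Omega_L)\sim c\,L^{(N-1)\alpha}$ in the second, with $c>0$. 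Hence $\mathcal F_\alpha(\Omega_L)\to0$ precisely when $\gamma<0$ (for $q\le2$) or $\alpha<0$ (for $q>2$), i.e.\ exactly when $\alpha<\max\{\frac{2-q}{q},0\}$, giving $\mathfrak m(\alpha)=0$ there and completing the equivalence. The delicate point is that the threshold $\max\{(2-q)/q,0\}$ must emerge exactly, which rests on this dichotomy in the slab asymptotics together with the regime-dependent choice of a two-sidedly controlled leading factor.

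Finally, for the ``moreover'' part assume $\alpha\ge\frac{2-q}{q}+\frac2N$, equivalently $\frac2N-\gamma\le0$, and use the third factorization. The Faber–Krahn inequality \eqref{FK} gives $L\,V^{\frac2N+\frac{2-q}{q}}\ge\lambda_{2,q}(B_1)\,\omega_N^{\frac2N+\frac{2-q}{q}}$ with equality only for balls, while, since the exponent $\frac2N-\gamma$ is nonpositive and $R^N/V\le1/\omega_N$, the second factor $(R^N/V)^{\frac2N-\gamma}$ is bounded below by its value $\omega_N^{\gamma-\frac2N}$ on the ball, again with equality only for balls. Both factorizations hold with no convexity assumption, so multiplying these pointwise inequalities shows that $\mathcal F_\alpha$ is minimized by balls over all open bounded sets. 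The step requiring care is the rigidity: equality in a product of positive factors forces equality in each, so I would combine the equality case $R^N/V=1/\omega_N$ (which forces $\Omega=B_R$ up to a null set) with the Faber–Krahn rigidity to conclude $\Omega$ is a ball; when $\frac2N-\gamma=0$ the ratio factor is constant and uniqueness comes solely from the Faber–Krahn equality case.
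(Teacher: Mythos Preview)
Your proof is correct and follows essentially the same route as the paper: the same three factorizations of $\mathcal F_\alpha$, the same slab test via Lemma~\ref{lm:slabtype} for the necessity direction, the same appeals to Theorem~\ref{teo:lowerbound}, \eqref{HPq}, \eqref{contiene}, and Faber--Krahn for sufficiency, and the same product-of-minimizers argument for the ``moreover'' clause. Your treatment of the rigidity (separating the case $\tfrac2N-\gamma=0$, where uniqueness rests on Faber--Krahn alone) is in fact slightly more explicit than the paper's.
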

\begin{proof}
Let us assume that $\mathfrak{m}(\alpha)>0$.
We consider the usual family of sets
\[
\Omega_L=\left(-\frac{L}{2},\frac{L}{2}\right)^{N-1}\times (0,1),
\]
and observe that, taking into account Lemma~\ref{lm:slabtype}, we have the following.\\

\noindent If $1\le q\le 2$ 
\[
R^{2-N\,\left(\alpha-\frac{2-q}{q}\right)}_{\Omega_L}\,\lambda_{2,q}(\Omega_L)\,|\Omega_L|^\alpha\sim C_{N,q,\alpha}\,\frac{\Big(\pi_{2,q}\Big)^2}{L^{(N-1)\,\frac{2-q}{q}}}\,L^{(N-1)\,\alpha}, 
\]
as $L\to+\infty$.\\

\noindent If $2<q<2^*$
\[
R^{2-N\,\left(\alpha-\frac{2-q}{q}\right)}_{\Omega_L}\,\lambda_{2,q}(\Omega_L)\,|\Omega_L|^\alpha\sim C_{N,q,\alpha}\,\lambda_{2,q}(\mathbb{R}^{N-1}\times(0,1))\,L^{(N-1)\,\alpha},
\]
as $L\to+\infty$.

\noindent Thus the assumption $\mathfrak{m}(\alpha)>0$ entails that we must have
\[
\alpha\ge \frac{2-q}{q},\quad \text{if }1\leq q\leq 2,\qquad \mbox{ and }\qquad \alpha\geq 0,\quad \text{if }2<q<2^*,
\]
as desired.
\vskip.2cm\noindent
Let us now assume that
\[
\alpha\ge \max\left\{\frac{2-q}{q},0\right\}.
\]
If $1\le q\le 2$, we can rewrite our functional as follows
\[
R_\Omega^{2-N\,\left(\alpha-\frac{2-q}{q}\right)}\,\lambda_{2,q}(\Omega)\,|\Omega|^\alpha=\left(R^2_\Omega\,\lambda_{2,q}(\Omega)\,|\Omega|^\frac{2-q}{q}\right)\,\left(\frac{|\Omega|}{R_\Omega^N}\right)^{\alpha-\frac{2-q}{q}},
\]
and observe that both terms are bounded from below by a positive constant, in the class of convex sets. For the first one, it is sufficient to use our estimate \eqref{HPweak}, while for the second one we can use that 
\begin{equation}
\label{contiene}
\frac{|\Omega|}{R^N_\Omega}\geq \omega_N.
\end{equation}
If $2<q<2^*$, we can instead rewrite our functional as follows 
\[
R_\Omega^{2-N\,\left(\alpha-\frac{2-q}{q}\right)}\,\lambda_{2,q}(\Omega)\,|\Omega|^\alpha=\left(R^{2-\frac{2-q}{q}\,N}_\Omega\,\lambda_{2,q}(\Omega)\right)\,\left(\frac{|\Omega|}{R_\Omega^N}\right)^\alpha,
\]
and use that for a open convex set $\Omega\subset\mathbb{R}^N$ we have (see \cite[Proposition 6.3]{BraHer})
\[
R^{2-\frac{2-q}{q}\,N}_\Omega\,\lambda_{2,q}(\Omega)\ge C_{N,q},
\]
and again \eqref{contiene}. In both cases, we get $\mathfrak{m}(\alpha)>0$.
\vskip.2cm\noindent
Finally, in the case 
\[
\alpha\ge \frac{2-q}{q}+\frac{2}{N},
\]
it is enough to rewrite the functional in the following form\[
R^{2-N\,\left(\alpha-\frac{2-q}{q}\right)}_{\Omega}\,\lambda_{2,q}(\Omega)\,|\Omega|^\alpha=\left(\lambda_{2,q}(\Omega)\,|\Omega|^{\frac{2}{N}+\frac{2-q}{q}}\right)\,\left(\frac{|\Omega|}{R^N_\Omega}\right)^{\alpha-\left(\frac{2-q}{q}+\frac{2}{N}\right)},
\]
and notice that both quantities are (uniquely) minimized by balls. The first one thanks to the Faber-Krahn inequality and the second one again by \eqref{contiene}. 
The proof is concluded.
\end{proof}
\begin{figure}
\includegraphics[scale=.3]{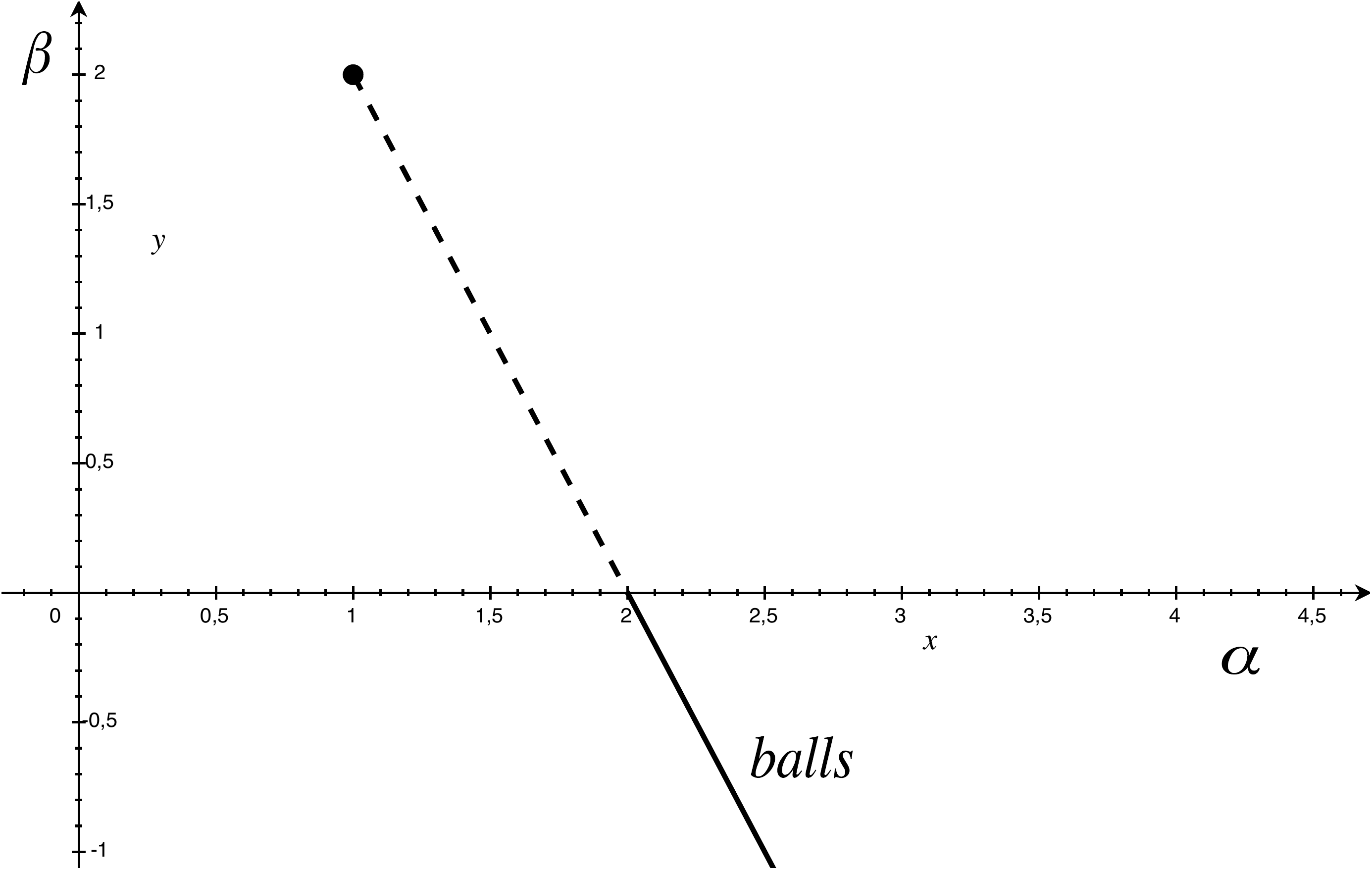}
\caption{A diagram illustrating the result of Lemma \ref{lm:m}, for the minimization of the quantity
\[
\Omega\mapsto R^\beta_{\Omega}\,\lambda_{2,q}(\Omega)\,|\Omega|^\alpha.
\] 
For simplicity we draw it for the case $N=2$ and $q=1$. The black dot corresponds to the case of our Main Theorem, i.e. $\alpha=(2-q)/2=1$ and $\beta=2$. In this case, slab--type sequences give the optimal lower bound \eqref{infolo}. The continuous black line corresponds to the cases where balls are extremals. The dotted line corresponds to the open cases, where the infimum $\mathfrak{m}(\alpha)$ is non-trivial, but its value is not known.}
\end{figure}
For the quantity \eqref{supolo} the situation is simpler and the picture is complete.
\begin{prop}[Maximization]
Let $1\le q<2^*$ and let $\alpha\in\mathbb{R}$ be an exponent. We have
\[
\mathfrak{M}(\alpha)<+\infty\qquad \Longleftrightarrow\qquad \alpha\leq \max\left\{\frac{2-q}{q},0\right\}.
\]
Moreover, in this case balls uniquely attains $\mathfrak{M}(\alpha)$.
\end{prop}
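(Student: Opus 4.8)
The plan is to follow exactly the scheme of the proof of Proposition \ref{lm:m}, with all inequalities reversed: the slab-type sequence will provide the necessity of the condition $\alpha\le\max\{(2-q)/q,0\}$, while the upper bound of Theorem \ref{teo:upperbound} together with the elementary containment inequality \eqref{contiene} will yield both the finiteness of $\mathfrak{M}(\alpha)$ and the identification of the maximizers. At balls the functional is scale invariant and equals $\lambda_{2,q}(B_1)\,\omega_N^\alpha$, and this is the value we expect for $\mathfrak{M}(\alpha)$ in the finite regime.

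For the necessity I would insert the usual family $\Omega_L=(-L/2,L/2)^{N-1}\times(0,1)$, for which $R_{\Omega_L}=1/2$ is constant and $|\Omega_L|\sim L^{N-1}$. Invoking the asymptotics of Lemma \ref{lm:slabtype}, the functional behaves like $C\,(\pi_{2,q})^2\,L^{(N-1)\left(\alpha-\frac{2-q}{q}\right)}$ when $1\le q\le 2$, and like $C\,\lambda_{2,q}(\mathbb{R}^{N-1}\times(0,1))\,L^{(N-1)\,\alpha}$ when $2<q<2^*$. Boundedness of the supremum thus forces $\alpha\le(2-q)/q$ in the first regime and $\alpha\le 0$ in the second, which is precisely $\alpha\le\max\{(2-q)/q,0\}$.

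For the sufficiency I would assume $\alpha\le\max\{(2-q)/q,0\}$ and factor the functional. When $1\le q\le 2$ I would write
\[
R_\Omega^{2-N\left(\alpha-\frac{2-q}{q}\right)}\,\lambda_{2,q}(\Omega)\,|\Omega|^\alpha=\Big(R_\Omega^2\,\lambda_{2,q}(\Omega)\,|\Omega|^\frac{2-q}{q}\Big)\,\Big(\frac{|\Omega|}{R_\Omega^N}\Big)^{\alpha-\frac{2-q}{q}},
\]
bounding the first factor above by $\omega_N^{(2-q)/q}\,\lambda_{2,q}(B_1)$ through \eqref{HPweakup}, and the second by $\omega_N^{\alpha-(2-q)/q}$, since the exponent is non-positive and $|\Omega|/R_\Omega^N\ge\omega_N$. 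When $2<q<2^*$ I would instead use
\[
R_\Omega^{2-N\left(\alpha-\frac{2-q}{q}\right)}\,\lambda_{2,q}(\Omega)\,|\Omega|^\alpha=\Big(R_\Omega^{2+\frac{2-q}{q}\,N}\,\lambda_{2,q}(\Omega)\Big)\,\Big(\frac{|\Omega|}{R_\Omega^N}\Big)^{\alpha},
\]
controlling the first factor by $\lambda_{2,q}(B_1)$ via the trivial bound \eqref{banale} and the second again by $\omega_N^\alpha$. In both cases the product is finite, giving $\mathfrak{M}(\alpha)<+\infty$.

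The genuinely delicate point—and the step I expect to be the main obstacle—is the uniqueness of the maximizers, because the functional is presented as a product of two separately sharp bounds and one must ensure they are saturated \emph{simultaneously} only by balls. Balls make both factors attain their maximum, hence realize $\mathfrak{M}(\alpha)=\lambda_{2,q}(B_1)\,\omega_N^\alpha$. Conversely, when the exponent on $|\Omega|/R_\Omega^N$ is strictly negative, the second factor is uniquely maximized when $|\Omega|=\omega_N\,R_\Omega^N$, which forces $\Omega=B_{R_\Omega}$, as a convex set of the same volume as an inscribed ball of radius $R_\Omega$ must coincide with it; at the boundary exponent the second factor is identically constant, so uniqueness is inherited from the strict equality case of \eqref{HPweakup} (respectively \eqref{banale}), where balls are the only extremals. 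Combining the two sub-cases closes the argument.
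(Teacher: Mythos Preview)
Your proof is correct and follows essentially the same approach as the paper: the slab-type sequence for necessity, and the same factorizations combined with \eqref{HPweakup}, \eqref{banale}, and \eqref{contiene} for sufficiency and uniqueness. Your treatment of the boundary-exponent case (where the second factor becomes constant and uniqueness must be extracted from the first factor alone) is in fact more explicit than the paper's, which simply asserts that both factors are uniquely maximized by balls.
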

\begin{proof}
Let us assume that $\mathfrak{M}(\alpha)<+\infty$. The asserted restriction on $\alpha$ can then be obtained as before, by considering the family of sets
\[
\Omega_L=\left(-\frac{L}{2},\frac{L}{2}\right)^{N-1}\times (0,1),
\]
we leave the details to the reader.
\vskip.2cm\noindent
We now assume that 
\[
\alpha\leq \max\left\{\frac{2-q}{q},0\right\}.
\]
For $1\le q\le 2$, we rewrite our functional as follows
\[
R^{2-N\,\left(\alpha-\frac{2-q}{q}\right)}_{\Omega}\,\lambda_{2,q}(\Omega)\,|\Omega|^\alpha=\left(R^{2}_{\Omega}\,\lambda_{2,q}(\Omega)\,|\Omega|^\frac{2-q}{q}\right)\,\left(\frac{|\Omega|}{R_\Omega^N}\right)^{\alpha-\frac{2-q}{q}},
\]
and observe that both terms are (uniquely) maximized by balls. The first one thanks to our estimate \eqref{HPweakup}, the second one by \eqref{contiene} (observe that the exponent $\alpha-(2-q)/q$ is non-positive).
\par
For $2<q<2^*$, our assumption on $\alpha$ entails that $\alpha\le 0$ and we use instead the following rewriting
\[
R^{2-N\,\left(\alpha-\frac{2-q}{q}\right)}_{\Omega}\,\lambda_{2,q}(\Omega)\,|\Omega|^\alpha=\left(R^{2-\frac{2-q}{q}\,N}_{\Omega}\,\lambda_{2,q}(\Omega)\right)\,\left(\frac{|\Omega|}{R_\Omega^N}\right)^\alpha.
\]
Here as well, both terms are (uniquely) maximized by balls. For the first one, it is sufficient to use the monotonicity of $\lambda_{2,q}$ with respect to set inclusion, while for the second one we use \eqref{contiene}, as usual.
\end{proof}

\appendix

\section{Some technical results}
The following simple one-dimensional result was an essential ingredient for the proof of the lower bound \eqref{HPweak}.
\begin{lm}
\label{lm:tecnico}
Let $a>0$ and let $\xi:[0,a]\to \mathbb{R}$ be an absolutely continuous function such that
\[
\xi(0)=0\qquad \mbox{ and }\qquad t\mapsto \frac{\xi(t)}{t} \mbox{ is  non-decreasing}.
\]
Let $\psi:[0,a]\to[0,+\infty)$ be a non-increasing function. Then we have
\[
\int_0^a \xi'(t)\,\psi(t)\,dt\le \frac{\xi(a)}{a}\,\int_0^a \psi(t)\,dt.
\] 
\end{lm}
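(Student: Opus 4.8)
The plan is to reduce the claimed inequality to a statement about the sign of a single integral, and then exploit the two monotonicity hypotheses separately. Concretely, I would introduce the auxiliary function
\[
h(t):=\xi(t)-\frac{\xi(a)}{a}\,t,\qquad t\in[0,a],
\]
which is absolutely continuous, satisfies $h(0)=h(a)=0$, and has $h'(t)=\xi'(t)-\xi(a)/a$. Since the constant term contributes exactly $\frac{\xi(a)}{a}\int_0^a\psi\,dt$ when $h'$ is used in place of $\xi'$, the desired inequality is \emph{equivalent} to $\int_0^a h'(t)\,\psi(t)\,dt\le 0$.

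The first key observation is that $h\le 0$ on the whole interval. Indeed, for $t\in(0,a]$ one may write $h(t)=t\,\bigl(\xi(t)/t-\xi(a)/a\bigr)$, and the hypothesis that $t\mapsto \xi(t)/t$ is non-decreasing gives $\xi(t)/t\le \xi(a)/a$; together with $h(0)=0$ this yields $h\le 0$ on all of $[0,a]$.

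The second step is to integrate $h'$ against the non-increasing weight $\psi$ and use the sign of $h$. The cleanest route I see, requiring no regularity on $\psi$, is a layer-cake (Fubini) argument. Since $\psi\ge 0$ is non-increasing, for each level $\lambda>0$ the superlevel set $\{t\in[0,a]:\psi(t)>\lambda\}$ is an interval of the form $[0,s_\lambda)$, and $\psi(t)=\int_0^\infty \mathbf 1_{\{\psi(t)>\lambda\}}\,d\lambda$. As $\psi$ is bounded (a finite-valued monotone function on a compact interval) and $h'\in L^1$, Fubini's theorem gives
\[
\int_0^a h'(t)\,\psi(t)\,dt=\int_0^\infty\left(\int_0^{s_\lambda} h'(t)\,dt\right)d\lambda=\int_0^\infty h(s_\lambda)\,d\lambda,
\]
where we used $h(0)=0$. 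Since $h\le 0$ everywhere, each inner value $h(s_\lambda)\le 0$, and the conclusion follows.

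Alternatively, I could integrate by parts in the Riemann--Stieltjes sense, writing $\int_0^a h'\psi\,dt=-\int_0^a h\,d\psi$ (the boundary term vanishes because $h(0)=h(a)=0$) and noting that $-d\psi$ is a non-negative measure while $h\le 0$. The main---indeed the only---delicate point in either route is the handling of the merely monotone, hence only bounded-variation, weight $\psi$: in the Stieltjes approach one must check that the boundary contributions cancel at possible jumps of $\psi$, whereas the layer-cake computation sidesteps this entirely, which is why I would favor it.
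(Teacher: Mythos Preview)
Your argument is correct. Both your proof and the paper's hinge on the same key observation, namely that the hypothesis $t\mapsto\xi(t)/t$ non-decreasing forces $\xi(t)/t\le\xi(a)/a$ on $(0,a]$; you package this as $h\le0$, the paper uses it pointwise inside an integral. The execution differs: the paper reduces to smooth $\psi$ ``without loss of generality'' and then integrates by parts twice, inserting the bound $\xi(t)/t\le\xi(a)/a$ between the two integrations. Your route---subtracting off the linear function, then decomposing $\psi$ via layer-cake and using $\int_0^{s_\lambda}h'=h(s_\lambda)\le0$---is a bit more direct and has the advantage that it treats an arbitrary non-increasing $\psi$ without any approximation step; the paper's ``WLOG smooth'' is of course justifiable, but your version simply sidesteps it. Your Stieltjes alternative is also fine, since $h(0)=h(a)=0$ kills the boundary terms regardless of how one handles jumps of $\psi$ at the endpoints.
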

\begin{proof}
Without loss of generality we can suppose that $\psi$ is smooth. By integrating by parts and observing that $\xi(0)=0$, we have
\[
\begin{split}
\int_0^a \xi'(t)\,\psi(t)\,dt&=\xi(a)\,\psi(a)+\int_0^{a} \xi(t)\,(-\psi'(t))\,dt\\
&=\xi(a)\,\psi(a)+\int_0^a \frac{\xi(t)}{t}\,t\,(-\psi'(t))\,dt\\
&\le \xi(a)\,\psi(a)+\int_0^a \frac{\xi(a)}{a}\,t\,(-\psi'(t))\,dt, 
\end{split}
\]
where we also used the monotonicity of both $\xi(t)/t$ and $\psi(t)$. We now further integrate by parts the last integral, so to get
\[
\int_0^a \xi'(t)\,\psi(t)\,dt\le \xi(a)\,\psi(a)-\frac{\xi(a)}{a}\,a\,\psi(a)+\frac{\xi(a)}{a}\,\int_0^a \psi(t)\,dt.
\]
This concludes the proof.
\end{proof}

\begin{lm}
\label{lm:slabtype}
Let $N\ge 2$ and let $1\le q<2^*$, for every $L>0$ we set
\[
\Omega_L=\left(-\frac{L}{2},\frac{L}{2}\right)^{N-1}\times (0,1).
\]
Then we have:
\begin{enumerate}
\item for $1\le q\le 2$
\[
\lim_{L\to +\infty} L^{(N-1)\,\frac{2-q}{q}}\,\lambda_{2,q}(\Omega_L)=\Big(\pi_{2,q}\Big)^2,\qquad \mbox{ as } L\to +\infty.
\]
\item for $2<q<2^*$
\[
\lim_{L\to+\infty} \lambda_{2,q}(\Omega_L)=\lambda_{2,q}(\mathbb{R}^{N-1}\times(0,1))>0. 
\]
\end{enumerate}
\end{lm}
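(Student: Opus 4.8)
The plan is to reduce both statements to the one-dimensional Poincar\'e-Sobolev inequality encoded in $\pi_{2,q}$, via slicing in the transverse variable. Throughout write $x=(x',x_N)$ with $x'\in Q_L:=(-L/2,L/2)^{N-1}$ and $x_N\in(0,1)$, and set $\Sigma=\mathbb{R}^{N-1}\times(0,1)$. The elementary starting point is that, for every $\varphi\in C^\infty_0(\Omega_L)$, the slice $x_N\mapsto\varphi(x',x_N)$ vanishes at both endpoints, so the very definition of $\pi_{2,q}$ gives
\[
\int_0^1|\partial_{x_N}\varphi(x',x_N)|^2\,dx_N\ge (\pi_{2,q})^2\left(\int_0^1|\varphi(x',x_N)|^q\,dx_N\right)^{\frac{2}{q}}.
\]

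For part (1) I first prove the lower bound, which in fact holds for every $L$ with no error term. Discarding the tangential derivatives, integrating the slice inequality over $Q_L$, and abbreviating $g(x')=\int_0^1|\varphi(x',x_N)|^q\,dx_N$, one is reduced to comparing $\int_{Q_L}g^{2/q}\,dx'$ with $\big(\int_{Q_L}g\,dx'\big)^{2/q}$. Since $1\le q\le 2$ forces $2/q\ge1$, the map $t\mapsto t^{2/q}$ is convex and Jensen's inequality on $Q_L$ (which has measure $L^{N-1}$) produces exactly the factor $\int_{Q_L}g^{2/q}\ge L^{-(N-1)\frac{2-q}{q}}\big(\int_{Q_L}g\big)^{2/q}$, whence $L^{(N-1)\frac{2-q}{q}}\lambda_{2,q}(\Omega_L)\ge(\pi_{2,q})^2$. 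For the matching upper bound I use a separated test function $\varphi(x)=\chi_L(x')\,v(x_N)$, where $v$ is the one-dimensional extremal for $\pi_{2,q}$ on $(0,1)$ and $\chi_L$ is a Lipschitz cutoff equal to $1$ away from $\partial Q_L$, vanishing near it, with $|\nabla\chi_L|$ bounded independently of $L$. The cutoff is genuinely necessary, since $v(x_N)$ alone does not vanish on the lateral boundary and is thus inadmissible. A direct computation shows that the gradient contribution of $\chi_L$ is supported on a shell of measure $O(L^{N-2})$ and so is negligible against the main term of order $L^{N-1}$, while the $L^q$-norm equals $L^{N-1}(1+o(1))\int_0^1 v^q$; hence $\limsup_L L^{(N-1)\frac{2-q}{q}}\lambda_{2,q}(\Omega_L)\le(\pi_{2,q})^2$. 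The two bounds give the claimed limit.

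For part (2), monotonicity of $\lambda_{2,q}$ under set inclusion shows that $L\mapsto\lambda_{2,q}(\Omega_L)$ is non-increasing and bounded below by $\lambda_{2,q}(\Sigma)$, so its limit exists and is $\ge\lambda_{2,q}(\Sigma)$. For the reverse inequality I observe that every $\varphi\in C^\infty_0(\Sigma)$ has compact support, hence belongs to $C^\infty_0(\Omega_L)$ once $L$ is large enough; testing the Rayleigh quotient and optimizing over such $\varphi$ yields $\limsup_L\lambda_{2,q}(\Omega_L)\le\lambda_{2,q}(\Sigma)$. It remains to verify $\lambda_{2,q}(\Sigma)>0$, which I obtain by combining the transverse Poincar\'e inequality $\|\varphi\|_{L^2(\Sigma)}\le\pi^{-1}\|\nabla\varphi\|_{L^2(\Sigma)}$ (coming from the unit width in $x_N$) with the Sobolev inequality $\|\varphi\|_{L^{2^*}(\mathbb{R}^N)}\le C\|\nabla\varphi\|_{L^2}$, interpolating $\|\varphi\|_{L^q(\Sigma)}$ between the $L^2$ and $L^{2^*}$ norms (for $N=2$, between $L^2$ and a large $L^p$ via Gagliardo-Nirenberg). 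This bounds $\|\varphi\|_{L^q(\Sigma)}$ by a multiple of $\|\nabla\varphi\|_{L^2(\Sigma)}$, i.e.\ $\lambda_{2,q}(\Sigma)>0$.

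The main obstacle, and the source of the dichotomy, is precisely the sign of $2-q$. For $q\le 2$ the exponent $2/q\ge1$ makes Jensen point in the favorable direction, so the slicing argument closes by itself and even delivers the sharp constant $(\pi_{2,q})^2$; for $q>2$ the same step would run the wrong way and cannot control the spreading of mass in the tangential variables, so the slab ceases to degenerate and one is forced into the monotonicity argument together with a genuinely $N$-dimensional positivity estimate for $\lambda_{2,q}(\Sigma)$.
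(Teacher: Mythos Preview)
Your proof is correct and, for part~(1), takes a genuinely different route from the paper. The paper does not argue directly: for $q=2$ it refers to a previous lemma, and for $1\le q<2$ it invokes an asymptotic identity from \cite{BraBus} of the form
\[
\lim_{L\to+\infty}\lambda_{2,q}(\Omega_L)\left(\frac{|\Omega_L|^{\frac{1}{2}+\frac{1}{q}}}{P(\Omega_L)}\right)^2=\left(\frac{\pi_{2,q}}{2}\right)^2,
\]
and then simplifies using $|\Omega_L|=L^{N-1}$ and $P(\Omega_L)\sim 2L^{N-1}$. Your slicing-plus-Jensen argument is more elementary and fully self-contained; it also makes transparent \emph{why} the threshold is at $q=2$ (convexity of $t\mapsto t^{2/q}$), which the paper's proof-by-citation hides. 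The cost is a little more explicit computation for the upper bound. For part~(2), your convergence argument is identical to the paper's; for the positivity $\lambda_{2,q}(\Sigma)>0$ the paper appeals to the Hersch--Protter type bound $\lambda_{2,q}(\Omega)\ge C_{N,q}\,R_\Omega^{-2-\frac{2-q}{q}N}$ for convex sets, whereas your Poincar\'e--Sobolev interpolation is again more direct and avoids that external reference.
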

\begin{proof}
We distinguish again the two cases.
\vskip.2cm\noindent
{\bf Case $1\le q\le 2$.} For $q=2$ this is contained for example in \cite[Lemma A.2]{BraBus}, we thus focus on the case $q<2$. By \cite[equation (3.6)]{BraBus}, we have
\[
\lim_{L\to+\infty}\lambda_{2,q}(\Omega_L)\,\left(\frac{|\Omega_L|^{\frac{1}{2}+\frac{1}{q}}}{P(\Omega_L)}\right)^2=\left(\frac{\pi_{2,q}}{2}\right)^2,
\]
where $P(\Omega_L)$ stands for the perimeter of $\Omega_L$.
If we now use that
\[
|\Omega_L|= L^{N-1}\qquad \mbox{ and }\qquad  P(\Omega_L)\sim 2\,L^{N-1},\quad \mbox{ as } L\to +\infty,
\]
we get the desired result.
\vskip.2cm\noindent
{\bf Case $2<q<2^*$.} By monotonicity of $\lambda_{2,q}$ with respect to set inclusion, we have that 
\[
\lambda_{2,q}(\Omega_L)\ge \lambda_{2,q}(\mathbb{R}^{N-1}\times (0,1))\quad \mbox{and}\quad L\mapsto \lambda_{2,q}(\Omega_L) \mbox{ is monotone decreasing}.
\]
Thus we get that 
\[
\lim_{L\to+\infty}\lambda_{2,q}(\Omega_L)\ge \lambda_{2,q}(\mathbb{R}^{N-1}\times (0,1)).
\]
On the other hand, for every $\varepsilon>0$ we can take $\varphi_\varepsilon\in C^\infty_0(\mathbb{R}^{N-1}\times(0,1))$ such that
\[
\lambda_{2,q}(\mathbb{R}^{N-1}\times (0,1))+\varepsilon\ge \frac{\displaystyle\int_{\mathbb{R}^{N-1}\times(0,1)} |\nabla \varphi_\varepsilon|^2\,dx}{\displaystyle\left(\int_{\mathbb{R}^{N-1}\times(0,1)} |\varphi_\varepsilon|^q\,dx\right)^\frac{2}{q}}.
\]
Since $\varphi_\varepsilon$ has compact support, for $L$ large enough we get that $\varphi_\varepsilon\in C^\infty_0(\Omega_L)$, as well. This shows that for every $\varepsilon>0$
\[
\lambda_{2,q}(\mathbb{R}^{N-1}\times (0,1))+\varepsilon\ge \lim_{L\to+\infty}\lambda_{2,q}(\Omega_L),
\]
and thus the claimed convergence of $\lambda_{2,q}(\Omega_L)$ follows. 
\par
We are only left with showing that $\mathbb{R}^{N-1}\times (0,1)$ has a non-trivial Poincar\'e-Sobolev constant $\lambda_{2,q}$. By recalling that for a open convex set $\Omega\subset\mathbb{R}^N$ we have (see \cite[Proposition 6.3]{BraHer})
\[
\lambda_{2,q}(\Omega)\ge \frac{C_{N,q}}{R_\Omega^{2+\frac{2-q}{q}\,N}},
\]
we get the desired assertion.
\end{proof}

\end{document}